\newcommand{\Z}{\mathbb{Z}}
\newcommand{\Q}{\mathbb{Q}}
\newcommand{\R}{\mathbb{R}}
\newcommand{\Ck}{C}
\newcommand{\eps}{\varepsilon}
\newcommand{\bb}{,\ldots ,}
\newtheorem*{theorem*}{Theorem}
\newtheorem{thm}{Theorem}
\newtheorem{lem}{Lemma}
\newtheorem*{claim*}{Claim}
	\newcounter{countknownthm}
\newtheorem{knownthm}[countknownthm]{Theorem}
\theoremstyle{definition}
	\newtheorem{rem}{Remark}
\newcommand{\mybox}[3]{
\node[entity] (step#1)[#2] {
 \begin{tabular}[t]{p{2.2cm}}
   \textbf{Step #1:} \\
   #3
\end{tabular}
 };
}
\newcommand{\pfeilrunter}[3]{
\draw [-latex,thick] (#1) --  node [fill=white, anchor=center, pos=0.5] {\small{#3}} (#2) ;
}
\newcommand{\pfeilrueber}[3]{
\draw [-latex,thick] (#1) --  node [fill=white, anchor=center, pos=0.5] {\small \begin{tabular}[t]{l}
   #3
\end{tabular}} 
(#2) ;
}
\begin{document}
\title[Sums of two Fibonacci numbers that are perfect powers]{On sums of two Fibonacci numbers that are powers of numbers with limited Hamming weight}
\subjclass[2020]{11B39,
11D61, 
11J86} 
\keywords{Fibonacci numbers, Zeckendorf representation, exponential Diophantine equation, linear forms in logarithms}
\thanks{The authors were supported by the Austrian Science Fund (FWF) under the project I4406. The first author was 
moreover supported by the Austrian Marshall Plan Foundation with a Marshall Plan Scholarship. She wants to thank Franklin \& Marshall College for their generous hospitality and Eva Goedhart for many helpful conversations.
}


\author[I. Vukusic]{Ingrid Vukusic}
\address{I. Vukusic,
University of Salzburg,
Hellbrunnerstrasse 34/I,
5020 Salzburg, Austria}
\email{ingrid.vukusic\char'100plus.ac.at}

\author[V. Ziegler]{Volker Ziegler}
\address{V. Ziegler,
University of Salzburg,
Hellbrunnerstrasse 34/I,
5020 Salzburg, Austria}
\email{volker.ziegler\char'100plus.ac.at}

\begin{abstract}
In 2018, Luca and Patel conjectured that the largest perfect power representable as the sum of two Fibonacci numbers is  $3864^2 = F_{36} + F_{12}$. In other words, they conjectured that the equation
\begin{equation}\tag{$\ast$}\label{eq:abstract}
	y^a = F_n + F_m
\end{equation}
has no solutions with $a\geq 2$ and $y^a > 3864^2$. While this is still an open problem, there exist several partial results. For example, recently Kebli, Kihel, Larone and Luca proved an explicit upper bound for $y^a$, which depends on the size of $y$. 

In this paper, we find an explicit upper bound for $y^a$, which only depends on the Hamming weight of $y$ with respect to the Zeckendorf representation. 
More specifically, we prove the following: If $y = F_{n_1}+ \dots + F_{n_k}$ and equation~\eqref{eq:abstract} is satisfied by $y$ and some non-negative integers $n,m$ and $a\geq 2$, then 
\[
	y^a 
	\leq \exp\left(C{(\eps)} \cdot k^{(3+\eps)k^2} \right).
\] 
Here, $\eps >0$ can be chosen arbitrarily and $C(\eps)$ is an effectively computable constant.
\end{abstract}

\maketitle

\section{Introduction}\label{sec:intro}

The Fibonacci numbers, defined by $F_0 = 0,$ $F_1 = 1$ and $F_{k+2} = F_{k+1} + F_k$ for $k\geq 0$, might be the most popular linear recurrence sequence of all. 
They have a great many beautiful properties and a vast amount of research has been done on problems involving Fibonacci numbers. 
For instance, it was a long-standing conjecture that 0, 1, 8
and 144 are the only Fibonacci Numbers that are perfect powers. This conjecture was proven in 2003 by Bugeaud, Mignotte and Siksek \cite{BugeaudMignotteSiksek2006}. 
In view of this result, it was a natural next step to search for all perfect powers that are sums of two Fibonacci numbers, i.e.\ to try and solve the equation
\begin{equation}\label{eq:main2fibos}
	F_n + F_m = y^a,
\end{equation}
where $n, m, y, a$ are non-negative integers with $a \geq 2$. 
There are 18 solutions known with $n\geq m \geq 0$, the largest being $F_{36} + F_{12} = 3864^2$.
In 2018, Luca and Patel~\cite{LucaPatel2018} conjectured that these are the only solutions to equation \eqref{eq:main2fibos}. They proved their conjecture in the case that $n \equiv m \pmod{2}$. The general conjecture, however, remains open.

Let us summarize further existing partial results on this conjecture. 
If $m=0$, then we have $F_n = y^a$, which, as mentioned above, was solved in \cite{BugeaudMignotteSiksek2006}. 
For $m=1,2$ we have the equation $F_n + 1 = y^a$, which was solved by Bugeaud, Luca, Mignotte, Siksek in 2006 \cite{BugeaudLucaMignotteSiksek2007}. 
For any fixed $y$ it is in principal possible to solve equation \eqref{eq:main2fibos} completely. For example, 
Bravo and Luca \cite{BravoLuca2016} solved the equation $F_n + F_m = 2^a$. 
In the general case with fixed $y$, explicit upper bounds for $n,m$ and $a$ in terms of $y$ were established recently in \cite{KebliKihelLaroneLuca2021} and in~\cite{KihelLarone2021}. Moreover, Kebli, Kihel, Larone and Luca \cite{KebliKihelLaroneLuca2021} proved that the $abc$-conjecture implies
that \eqref{eq:main2fibos} has only finitely many solutions.
Most recently, Ziegler \cite{Ziegler2022} proved that
for any fixed $y$ equation \eqref{eq:main2fibos} has at most one solution with $a\geq 1$, unless $y = 2, 3, 4, 6, 10$. 
In particular, his result implies that if y can be represented as $y = F_{n_1} + F_{n_2}$, then equation \eqref{eq:main2fibos} has no solutions with $a \geq 2$  (with the exceptions $y = 2, 3, 4, 6, 10$). 

In this paper, we want to make another step towards solving equation~\eqref{eq:main2fibos}, and generalize the above mentioned results in the following way: Instead of fixing $y$ or requiring that it have the form $y = F_{n_1} + F_{n_2}$, we allow arbitrary $y$ with bounded Hamming weight with respect to the Zeckendorf representation (i.e.\ $y = F_{n_1}+ \dots + F_{n_k}$ with bounded $k$). More specifically, we give an explicit upper bound for any perfect power $y^a$ that is a sum of two Fibonacci numbers $y^a = F_n+F_m$, and the upper bound does not depend on the size of $y$, but only on the Hamming weight of the Zeckendorf representation of $y$. We now state our main result.

\begin{thm}\label{thm:main}
Let $\eps >0$. Then there exists an effectively computable constant $C(\eps)$ such that the following holds.
If the equations
\begin{align}
\tag{A}\label{eq:main-y}
	y &= F_{n_1}+ \dots + F_{n_k}
	\quad \text{and}\\
\tag{B}\label{eq:main-ya}
	y^a &= F_n + F_m
\end{align}
are satisfied by some non-negative integers $y, a, n_1 \bb n_k, n, m$ with $a \geq 2$, then
\begin{equation}\label{eq:mainbound}
	y^a 
	\leq \exp\left(C{(\eps)} \cdot k^{(3+\eps)k^2} \right).
\end{equation} 
\end{thm}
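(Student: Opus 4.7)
My plan is to apply the theory of linear forms in logarithms (Matveev's theorem) iteratively, peeling off the $k$ Zeckendorf terms of $y$ one at a time. The point of using the Binet formula $F_n = (\alpha^n - \beta^n)/\sqrt 5$ with $\alpha = (1+\sqrt 5)/2$ throughout, rather than treating $y$ directly as an unknown algebraic number, is that every linear form produced involves only the \emph{fixed} algebraic numbers $\alpha$ and $\sqrt 5$ (together with rational multinomial coefficients), so that the heights appearing in Matveev's estimates do not depend on $y$. The price is that the argument must be iterated roughly $k$ times, and this is what produces the $k^{(3+\eps)k^2}$ factor in the final exponent.

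Setup. Assume $n_1 > \dots > n_k$ and $n \geq m$. Combining \eqref{eq:main-y} and \eqref{eq:main-ya} via Binet yields the identity
\[
    \left(\sum_{i=1}^k (\alpha^{n_i} - \beta^{n_i})\right)^{\!a} \;=\; 5^{(a-1)/2}\bigl(\alpha^n - \beta^n + \alpha^m - \beta^m\bigr).
\]
Expanding the left-hand side multinomially gives a sum of monomials $c_{\mathbf j}\alpha^{\langle \mathbf j,\mathbf n\rangle}$ (each multiplied by a bounded $\beta$-factor), which I order by decreasing size of the exponent $\langle \mathbf j,\mathbf n\rangle$. The dominant monomial is $\alpha^{an_1}$ and the right-hand side is dominated by $5^{(a-1)/2}\alpha^n$.

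Iteration. At step $\ell$ I match the $\ell$-th largest pair of terms on either side, obtaining a linear form
\[
    \Lambda_\ell \;=\; b_{\ell,1} \log \alpha + b_{\ell,2} \log 5 + \sum_j b'_{\ell,j}\log r_{\ell,j}
\]
that is close to zero, where the $r_{\ell,j}$ are rationals coming from multinomial coefficients (some involving $a$) and the integer coefficients $b_{\ell,i}$, $b'_{\ell,j}$ are explicit linear combinations of $a,n_1,\dots,n_\ell,n,m$. The subleading terms on the two sides produce an upper bound $|\Lambda_\ell|\ll \alpha^{-D_\ell}$ for an explicit ``gap'' $D_\ell$ (a difference of two $n_i$'s, or $n-m$, or similar), whereas Matveev's theorem applied to $\Lambda_\ell$, viewed as a form in $O(\ell)$ logarithms of algebraic numbers of bounded height, gives a lower bound $|\Lambda_\ell|\geq \exp(-C_\ell \log B_\ell)$, with $B_\ell$ controlled by the bounds produced at earlier steps. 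Combining the two bounds controls $D_\ell$. After $k$ iterations every Zeckendorf gap and every relevant exponent difference is controlled, from which one extracts bounds on $a$ and on $n_1$, and hence on $y^a\approx\alpha^{an_1}$; the composition of the $k$ Matveev constants and coefficient heights produces the stated $\exp\bigl(C(\eps)\,k^{(3+\eps)k^2}\bigr)$.

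The main obstacles are (i) the combinatorial bookkeeping of which multinomial monomials dominate at each stage, together with the (essential, but nontrivial) verification that each $\Lambda_\ell$ is nonzero so that Matveev applies, and (ii) propagating the Matveev constants and coefficient heights through the $k$ iterations without losing too much. The free $\eps$ in the exponent $3+\eps$ reflects that Matveev's constant in $n$ logarithms grows faster than any fixed power of $n$; choosing $\eps>0$ absorbs sub-polynomial corrections and factors such as $\log\ell!$ that accumulate over the iteration.
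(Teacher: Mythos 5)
Your plan shares the paper's starting point (Binet, then iterate linear forms in logarithms to peel off the Zeckendorf digits of $y$), but the route through the multinomial expansion of $\bigl(\sum_i(\alpha^{n_i}-\beta^{n_i})\bigr)^a$ is not the paper's route, and as stated it has serious gaps.

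The paper never raises anything to the $a$-th power. It constructs from \eqref{eq:main-y} and \eqref{eq:main-ya} two families of ``basic'' linear forms that \emph{both} contain the unknown quantity $\log y$ (with coefficients $1$ and $a$ respectively), and then forms $a\Lambda_{A\ell}-\Lambda_{B j}$ to eliminate $\log y$. The payoff is that every Matveev application is to a linear form in at most four logarithms, whose arguments are $\sqrt5$, $\alpha$, $1+\alpha^{n_2-n_1}+\cdots+\alpha^{n_\ell-n_1}$ and $\alpha^{n-m}+1$ --- none of which involves $a$; the integer $a$ only appears as a \emph{coefficient}, which Matveev handles via $\log B$. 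In your multinomial plan, $a$ is entangled with the algebraic numbers (the multinomial coefficients $\binom{a}{\mathbf j}$ become arguments of logarithms), the linear forms grow to $O(\ell)$ terms, and the ordering of the exponents $\langle\mathbf j,\mathbf n\rangle$ depends unpredictably on the spacings $n_i-n_{i+1}$. The paper sidesteps all of that combinatorics by grouping the first $\ell$ contributions into the \emph{single} algebraic number $1+\alpha^{n_2-n_1}+\cdots+\alpha^{n_\ell-n_1}$, raised to the $a$-th power only at the level of the integer coefficient $-a$ in front of its logarithm. That grouping is essentially the missing idea, and without it the bookkeeping you flag as ``nontrivial'' is, I think, not just nontrivial but intractable.

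There is also a genuine logical gap at the end. Your iteration can at best show $n_1\ll(\log n)^{O(k^2)}$ and similar bounds for the gaps; but these are bounds in terms of $n$, which is still free. The paper closes the loop by invoking the result of Kebli, Kihel, Larone and Luca \cite{KebliKihelLaroneLuca2021}, namely $a<n<6\cdot10^{29}(\log y)^4$, combined with $\log y< n_1$ (Lemma~\ref{lem:logyn1}); this turns the chain of estimates into a self-referential inequality $n\le c\,(\log n)^x$, which Lemma~\ref{lem:ungl} then resolves into an absolute bound. Your sketch of ``extracting bounds on $a$ and $n_1$, and hence on $y^a$'' does not explain where this closure comes from, and without such an external input the argument does not terminate. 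Two further remarks: (i) the nonvanishing of the linear forms is handled in the paper via the divisibility Lemma~\ref{lem:teilbarkeitSqrt5} together with the Luca--Patel theorem (Theorem~\ref{thm:LucaPatel}) to exclude $n\equiv m\pmod2$; this needs to be addressed concretely, not just acknowledged. (ii) Your explanation of the $\eps$ in $3+\eps$ is not the right one: the Matveev constant in the paper is fixed (always $t\le4$ logarithms), and the $\eps$ instead comes from inverting the inequality $n\le c(\log n)^x$ via Lemma~\ref{lem:ungl}, where one trades a factor $\delta>0$ for an explicit bound $(2x)^{(1+\delta)x}c$.
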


\begin{rem}\label{rem:const}
The constant $C(\eps)$ can indeed be computed from our proof. However, it will be extremely large and not useful in practice. This is because we chose to write the upper bound in a way that is both simple and asymptotically good. So if one chooses an $\eps >0$ and 
computes the constant $C(\eps)$ such that \eqref{eq:mainbound} holds for all $k$, the bound will be extremely bad for small $k$.

For computing an actual upper bound for a given $k$, we recommend going to equation~\eqref{eq:Tkplus1} in Section~\ref{sec:walking} and computing the maximum of the expressions $T_{k+1}$ over all $1\leq \ell_0\leq k$. Then, one can proceed as described in Remark~\ref{rem:strategy} in Section~\ref{sec:prelim_fibo} and solve the inequality $n < 6 \cdot 10^{29} \cdot T_{k+1}^4$.
\end{rem}

Let us outline the rest of the paper and the strategy of our proof.
In Section~\ref{sec:prelim}, we state some preliminary results related to Fibonacci numbers and our problem, as well as lower bounds for linear forms in logarithms and an inequality. In Section~\ref{sec:linforms}, we construct a total of $2k$ ``basic'' linear forms in logarithms from equations~\eqref{eq:main-y} and~\eqref{eq:main-ya}. Each of these linear forms will contain the unknown logarithm $\log y$. The main idea of the proof is the following: In several steps ($k$ or $k+1$ steps), we take two of the ``basic'' linear forms at a time and eliminate $\log y$. Then we apply lower bounds for linear forms in logarithms to the new linear form and obtain an upper bound for one of the expressions $n-m, n_1-n_2 \bb n_1-n_k, n_1$. Depending on how large $n-m$ is compared to $n_1-n_2 \bb n_1-n_k, n_1$, we need to do a slightly different succession of steps. An overview of these steps can be found in Figure~\ref{fig:steps} in Section~\ref{sec:elim}. The exact bounds, that are obtained in each step, are computed in Section~\ref{sec:Matveev}. We ``walk the steps'' in Section~\ref{sec:walking}: Depending on which path of steps we walk, we end up with a different bound for $n_1$ in the last step, see Figure~\ref{fig:steps2}. 
In Section ~\ref{sec:walking}, we compute these bounds and find a common upper bound for $n_1$ of the shape $n_1 < c \cdot (\log n)^x$. 
In particular, this implies $\log y < c \cdot (\log n)^x$. 
Finally, in Section~\ref{sec:finish} we combine this bound with the bound from \cite{KebliKihelLaroneLuca2021}, which is of the shape $n<c (\log y)^4$. 
Thus, we end up with an inequality of the shape $n < c \cdot (\log n)^x$, which implies an upper bound for $n$ (see Remark~\ref{rem:strategy}).
Let us moreover point out that we use the result from \cite{LucaPatel2018} to exclude the case $n \equiv m \pmod{2}$. This is very helpful for checking that our linear forms in logarithms don't vanish.

Of course, the strategy of applying lower bounds for linear forms in logarithms to exponential Diophantine equations has been well known and extensively used for a long time. In this paper, we use two particular tricks: the elimination of unknown logarithms and a kind of finite induction. Both of these tricks have been used in several papers before (see e.g.\ \cite{Luca:2017} for the elimination of unknown logarithms and \cite{Luca:2000} for the finite induction method), however, to the authors' best knowledge, this is the first time that they are used in a combined way. Moreover, the induction is not just a straightforward induction over $k$ steps, but different cases lead to quite different bounds. It is interesting to see how the bounds depend on the cases, and to then determine an overall asymptotically good bound.

\section{Preliminary results}\label{sec:prelim}

In this section we, start by recalling some basic properties of Fibonacci numbers. Moreover, we argue why we may assume $n_k \geq 2$ and $n_{i+1}\geq n_{i}+2$ for $i=1 \bb k-1$, as well as $n-2\geq m \geq 2$ in the rest of the paper.
Then we state some known results and some elementary results related to equations \eqref{eq:main-y} and \eqref{eq:main-ya}.

In the second subsection, we state one of Matveev's lower bounds for linear forms in logarithms. Furthermore, we prove an elementary lemma that will allow us to deduce an absolute bound for $n$ from a bound of the shape $n \leq c \cdot (\log n)^x$.

\subsection{Results related to Fibonacci numbers}\label{sec:prelim_fibo}

For the Fibonacci numbers we have the well known Binet formula
\[
	F_n = \frac{\alpha^n - \beta^n}{\sqrt{5}},
	\quad \text{where} \quad
	\alpha = \frac{1+\sqrt{5}}{2}
	\quad \text{and} \quad
	\beta = - 1/\alpha = \frac{1-\sqrt{5}}{2}.
\]

In Theorem~\ref{thm:main} the representations $F_{n_1}+ \dots + F_{n_k}$ and $F_n + F_m$ are not necessarily Zeckendorf representations, i.e.\ we might have consecutive or identical indices, or indices equal to 0 or 1. However, in the rest of this paper we will assume that $n_k \geq 2$ and $n_{i+1}\geq n_{i}+2$ for $i=1 \bb k-1$, as well as $n-2\geq m \geq 2$. Let us justify now why we can do this. 
If $F_{n_1}+ \dots + F_{n_k}$ is not a Zeckendorf representation, then we can consider the Zeckendorf representation $F_{n'_1}+ \dots + F_{n'_{k'}}$ instead. Since the Zeckendorf representation is minimal (see e.g.\ \cite[Theorem 1.1]{Cordwell:2018}) we have $k'\leq k$ and we will get the same or an even stronger result. If $F_n + F_m$ is not a Zeckendorf representation, then either $n=m$, or the Zeckendorf representation in fact only consists of one Fibonacci number and we have $y^a = F_{n'}$. As mentioned in the \nameref{sec:intro}, the latter is by \cite{BugeaudMignotteSiksek2006} only possible for $y^a \leq 144$, in which case we are done.
If $n = m$, then $y^a = 2 F_n$ implies $F_n = 2^s (y')^a$, for suitable $y'$ and $s$. From \cite[Theorem 4]{BugeaudLucaMignotteSiksek2007} it follows that this is only possible for $n=1,2,3,6,12$ (cf.\ \cite[Theorem 2]{LucaPatel2018}). In fact, $y^a = 2F_n$ only works for $n = 3, 6$ and thus $y^a \leq 2F_6 = 16$, and we are done as well.

Finally, note that we may assume $y\geq 2$, since Theorem~\ref{thm:main} is trivial for $y=0,1$.

Next, we state the result due to Luca and Patel \cite{LucaPatel2018}, that we mentioned in the \nameref{sec:intro}. We will use it to exclude the case $n \equiv m \pmod{2}$.

\begin{knownthm}[Luca and Patel, 2018]\label{thm:LucaPatel}
Let $(y, a, n,m)$ be a solution to \eqref{eq:main-ya}, i.e.
\[
	y^a = F_n + F_m,
\]
with $y\geq 1$, $a\geq 2$ and $n-2 \geq m \geq 2$. If $n \equiv m \pmod{2}$, then $n\leq 36$.
\end{knownthm}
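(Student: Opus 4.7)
The plan is to exploit the parity condition $n\equiv m\pmod 2$ to \emph{factor} $F_n+F_m$ and then appeal to known results about perfect powers among Fibonacci and Lucas numbers. Setting $p=(n+m)/2$ and $q=(n-m)/2$ (both positive integers with $p>q\geq 1$), the Binet formula together with $\alpha\beta=-1$ yields
\[
	F_n + F_m = \begin{cases} F_p\,L_q & \text{if $q$ is even,} \\ F_q\,L_p & \text{if $q$ is odd.} \end{cases}
\]
Thus the equation becomes $y^a = F_I\,L_J$ for suitable indices $I,J$ with $I+J=n$, so $y^a$ is a priori the product of a Fibonacci and a Lucas number.

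Next I would analyse the common factors between $F_I$ and $L_J$. Using the standard identities $\gcd(F_m,F_n)=F_{\gcd(m,n)}$ and $F_{2k}=F_k L_k$, the divisibility pattern of $\gcd(F_I,L_J)$ is governed by the arithmetic relationship between $I$ and $J$, and by whether $3\mid I$, $3\mid J$ (which controls $2\mid F_I$, $2\mid L_J$). After extracting the gcd and carefully tracking the $2$-adic valuations, one reduces $y^a=F_I L_J$ to finitely many equations of the shape
\[
	F_k\cdot(\text{explicit small factor}) = y_1^a
	\qquad \text{or} \qquad
	L_k\cdot(\text{explicit small factor}) = y_2^a,
\]
where the ``small factor'' is a bounded power of~$2$ (or involves $F_k$, $L_k$ to a low power in the degenerate case $I=2J$, corresponding to $n=3m$).

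At this point I would invoke the classical theorems: Bugeaud--Mignotte--Siksek~\cite{BugeaudMignotteSiksek2006} show that the only perfect-power Fibonacci numbers are $F_0,F_1,F_2,F_6,F_{12}$ and the only perfect-power Lucas numbers are $L_1,L_3$, while~\cite[Theorem~4]{BugeaudLucaMignotteSiksek2007} (and its Lucas analogue) handles the equations $F_k=2^s y^a$ and $L_k=2^s y^a$. Each of these forces the free index into an explicit finite set. Running through the resulting finitely many configurations, one recovers $n=I+J$ and checks $n\leq 36$ in every case, the bound being realised by the extremal solution $F_{36}+F_{12} = F_{12}\,L_{12}^2 = 3864^2$.

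The main obstacle I anticipate is the bookkeeping in the ``degenerate'' configurations in which $F_I$ and $L_J$ share a large common factor, most notably the case $I=2J$ (i.e.\ $n=3m$), where $F_n+F_m$ collapses to a single Fibonacci times the square of a Lucas number (for $q$ even) or vice versa, and one is left to decide when \emph{one} Fibonacci or Lucas index alone supplies the perfect power. Each such configuration must be reduced separately to a known perfect-power equation before the cited theorems can be applied; once that bookkeeping is done, the appeal to Bugeaud--Mignotte--Siksek-type results closes the proof.
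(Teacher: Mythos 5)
This theorem is not proved in the paper you were given; it is quoted verbatim as a known result from Luca and Patel~\cite{LucaPatel2018}, so there is no in-paper argument to compare against. Your sketch does, however, capture the central idea of the original Luca--Patel proof: writing $p=(n+m)/2$, $q=(n-m)/2$, the parity hypothesis gives the factorization $F_n+F_m=F_pL_q$ (for $q$ even) or $F_qL_p$ (for $q$ odd), after which one studies the common factor of the two pieces and appeals to the Bugeaud--Mignotte--Siksek classification of perfect powers (and near-powers) in the Fibonacci and Lucas sequences. That is indeed the route taken in~\cite{LucaPatel2018}, so your outline is on target.

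One caution about the way you describe the reduction: you claim that after extracting the gcd, the ``small factor'' multiplying $F_k$ or $L_k$ is a bounded power of~$2$. That is true only when $\gcd(F_p,L_q)\in\{1,2\}$, which is the generic situation, but it fails in the exceptional case $p/d$ even, $q/d$ odd (with $d=\gcd(p,q)$), where $\gcd(F_p,L_q)=L_d$ can be an arbitrarily large Lucas number. In that case $y^a=F_pL_q$ does \emph{not} split directly into two coprime near-powers, and one must first iterate the doubling identity $F_{2k}=F_kL_k$ to peel off the shared Lucas factor before the coprimality argument applies. This case is not a side issue; it is where most of the genuine work of the Luca--Patel proof lies, and your sketch would need to treat it explicitly (it subsumes, but is broader than, the $n=3m$ degeneracy you single out). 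With that caveat, the strategy is sound and matches the cited source.
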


Given integers $y,a,n>m$ that satisfy equation~\eqref{eq:main-ya}, one can use lower bounds for linear forms in logarithms to obtain a bound for $n$ in terms of $y$. This was done explicitly by Kebli et al. \cite[Theorem 1]{KebliKihelLaroneLuca2021} and we will use their bound in our proof.

\begin{knownthm}[Kebli et al., 2021]\label{thm:Kebli}
Let $(y, a, n,m)$ be a solution to \eqref{eq:main-ya}, i.e.
\[
	y^a = F_n + F_m,
\]
 with $y\geq 2$, $a\geq 2$ and $n\geq m \geq 0$. Then
\[
	a < n < 6 \cdot 10^{29}(\log y)^4.
\]
\end{knownthm}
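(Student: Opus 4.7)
The plan is to apply Matveev's lower bound for linear forms in logarithms twice, to two different linear forms extracted from the equation $y^a = F_n + F_m$ via the Binet formula. The first application produces a bound on $n-m$, and the second leverages this bound to control $n$ itself.

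For the first linear form, use $\sqrt{5}\,y^a = \alpha^n - \beta^n + \alpha^m - \beta^m$, isolate the dominant term $\alpha^n$, and divide by $\alpha^n/\sqrt{5}$ to get
\[
\bigl|\sqrt{5}\,y^a\,\alpha^{-n} - 1\bigr| \leq 4\alpha^{m-n}.
\]
The associated three-term linear form is $\Lambda_1 = a\log y - n\log\alpha + \tfrac{1}{2}\log 5$. Nonvanishing follows from the identity $\alpha^n = (L_n + F_n\sqrt{5})/2$, since $\sqrt{5}\,y^a = \alpha^n$ would force $L_n = 0$. Matveev's theorem in $\Q(\sqrt{5})$ (degree $2$, three algebraic numbers $y, \alpha, \sqrt{5}$, with $B$ of order $n$) yields $\log|\Lambda_1| > -C_1\log y \cdot \log n$, and comparison with the upper bound gives
\[
(n - m)\log\alpha < C_1'\,\log y \cdot \log n.
\]

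For the second linear form, group both dominant terms: $\sqrt{5}\,y^a - (\alpha^n + \alpha^m) = -(\beta^n + \beta^m)$, so $|\sqrt{5}\,y^a - (\alpha^n + \alpha^m)| \leq 2$. Factoring $\alpha^n + \alpha^m = \alpha^n(1 + \alpha^{m-n})$ leads to
\[
|\Lambda_2| \leq 2\alpha^{-n} \qquad \text{with} \qquad \Lambda_2 = a\log y + \tfrac{1}{2}\log 5 - n\log\alpha - \log(1+\alpha^{m-n}).
\]
Here $1+\alpha^{m-n} \in \Q(\sqrt{5})$ is a new algebraic number whose logarithmic height is of order $n-m$, which by the first step is of order $\log y \cdot \log n$; nonvanishing of $\Lambda_2$ follows from $L_n + L_m > 0$. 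Applying Matveev to $\Lambda_2$ (now with four logarithms) and comparing with the upper bound produces an implicit inequality of the shape $n \leq C_2(\log y)^s(\log n)^t$ for explicit $s,t$; resolving this in $n$ by the standard trick (essentially the elementary lemma alluded to in Remark~\ref{rem:const}) yields the claimed bound $n < 6\cdot 10^{29}(\log y)^4$.

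The main obstacle is bookkeeping: one must carefully track the numerical constants arising from each application of Matveev's theorem in order to reach the explicit constant $6 \cdot 10^{29}$. Edge cases such as $m \in \{0,1,2\}$ (covered by the Bugeaud--Luca--Mignotte--Siksek result on $F_n + 1 = y^a$) and $n = m$ (which gives $y^a = 2F_n$, again covered by known results) must be treated separately, but each reduces to finitely many small instances and does not affect the asymptotic bound.
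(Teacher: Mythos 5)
This statement is a \emph{cited} result (Theorem~B, labelled \texttt{knownthm}) taken from Kebli, Kihel, Larone and Luca \cite{KebliKihelLaroneLuca2021}; the paper you were given does not prove it, so there is no ``paper's own proof'' to compare your attempt against. That said, your reconstruction follows precisely the two-stage Matveev strategy one would expect the original authors to have used, and it is the same scheme this paper itself employs later (its linear forms $\Lambda_{B1}$ and $\Lambda_{B2}$ are exactly your $\Lambda_1$ and $\Lambda_2$): isolate $\alpha^n$ to bound $n-m$, then isolate $\alpha^n+\alpha^m$ and feed the bound on $h(1+\alpha^{m-n})$ back in to bound $n$. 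Your nonvanishing arguments are sound --- for $\Lambda_1$, $\sqrt{5}\,y^a=\alpha^n$ is impossible (e.g.\ by taking norms, $-5y^{2a}=(-1)^n$), and for $\Lambda_2$, $\sqrt{5}\,y^a=\alpha^n+\alpha^m$ together with its conjugate gives $L_n+L_m=0$, absurd.

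Two small remarks. First, the ``edge cases'' $m\in\{0,1,2\}$ and $n=m$ need no separate treatment \emph{for this theorem}: those case distinctions matter in the full Luca--Patel problem, but the linear-forms argument above is valid for all $n\geq m\geq 0$ (when $n-m$ is small the first step is vacuous, and when $m=n$ one has $1+\alpha^{m-n}=2$, still a legitimate algebraic number). Second, the elementary resolution step ``$n\leq C_2(\log y)^s(\log n)^t \Rightarrow n\ll(\log y)^4$'' is not quite the lemma alluded to in Remark~\ref{rem:const} (Lemma~\ref{lem:ungl} handles $n\leq c(\log n)^x$ with no $\log y$), but the technique is of the same standard flavour, and a careful bookkeeping of the Matveev constants --- which you rightly flag as the main labour --- does produce a bound of the advertised quality $n<6\cdot 10^{29}(\log y)^4$. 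Overall your reconstruction is correct in method and in the essential details.
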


Next, we consider equation~\eqref{eq:main-y} and observe the following.

\begin{lem}\label{lem:logyn1}
Let $(y, n_1 \bb n_k)$ be a solution to \eqref{eq:main-y}, i.e.
\[
	y = F_{n_1}+ \dots + F_{n_k},
\]
with $n_k \geq 2$ and $n_{i+1}\geq n_{i}+2$ for $i=1 \bb k-1$.
Then
\[
	\log y < n_1.
\] 
\end{lem}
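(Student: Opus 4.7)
The plan is to bound $y$ from above by a single Fibonacci number and then convert that bound into an inequality in terms of $\alpha$, the dominant root of the Fibonacci recurrence. I take $n_1$ to be the largest index in the representation (consistent with the expressions $n_1 - n_2, \ldots, n_1 - n_k$ used throughout the paper), so that the gap condition reads $n_1 > n_2 > \cdots > n_k$ with consecutive indices differing by at least two.

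First I would establish the Zeckendorf-type upper bound
\[
    y = F_{n_1} + F_{n_2} + \cdots + F_{n_k} < F_{n_1+1}
\]
by a short induction on $k$. For $k=1$ this is just $F_{n_1} < F_{n_1+1}$. For the inductive step, the induction hypothesis applied to the tail $F_{n_2}+\cdots+F_{n_k}$ (with largest index $n_2$) gives $F_{n_2}+\cdots+F_{n_k} < F_{n_2+1}$, and the gap condition $n_1 \geq n_2+2$ yields $F_{n_2+1} \leq F_{n_1-1}$. Combining, $y < F_{n_1} + F_{n_1-1} = F_{n_1+1}$.

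Next I would invoke the elementary inequality $F_n \leq \alpha^{n-1}$ valid for all $n \geq 1$, which follows by a two-line induction from $F_1 = F_2 = 1$ and $\alpha^2 = \alpha + 1$. Applied with $n = n_1+1$, it gives $y < F_{n_1+1} \leq \alpha^{n_1}$, so
\[
    \log y < n_1 \log \alpha < n_1,
\]
where the final strict inequality uses $\log \alpha = \log\tfrac{1+\sqrt{5}}{2} \approx 0.481 < 1$.

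There is essentially no obstacle: the lemma is a routine combination of a standard Zeckendorf tail bound with the trivial fact that $\log\alpha < 1$. The only points requiring care are the base case $k=1$ of the induction (immediate) and the hypothesis $n_k \geq 2$, which ensures that every Fibonacci index appearing in the estimates lies in the range where $F_n \leq \alpha^{n-1}$ has been established.
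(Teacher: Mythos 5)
Your proof is correct and takes essentially the same route as the paper: bound $y$ by a Zeckendorf-type estimate (you use $y < F_{n_1+1}$ where the paper uses the slightly weaker $y < 2F_{n_1}$), then compare with a power of $\alpha$ and use $\log\alpha < 1$. The only cosmetic difference is that you avoid the Binet formula in favor of the elementary inequality $F_n \le \alpha^{n-1}$, making the argument marginally more self-contained, but the substance is the same.
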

\begin{proof}
By the properties of the Zeckendorf expansion, we have $y= F_{n_1} + \dots + F_{n_k} < 2 F_{n_1}$, which implies $\log y < \log (2 F_{n_1}) = \log 2 + \log (\alpha^{n_1} - \beta^{n_1}) - \log \sqrt{5} < n_1$.
\end{proof}

\begin{rem}\label{rem:strategy}
In view of Theorem \ref{thm:Kebli} and Lemma \ref{lem:logyn1}, we are done if we manage to prove a bound for $n_1$ of the shape $n_1 \ll (\log n)^x$: Then we have $n < 6 \cdot 10^{29} (\log y)^4 \leq 6 \cdot 10^{29} n_1^4 \ll (\log n)^{4x}$, which immediately gives us an effective upper bound for $n$. This will indeed be our strategy.
\end{rem}

%

We will use the following not very sharp but simple estimates. Note that there is no benefit from making them sharper, as the constants coming from these estimates will be ``swallowed'' by a much larger constant in Section~\ref{sec:Matveev}.

\begin{lem}\label{lem:L}
	For any $k \in \Z_{\geq 1}$ and $n_1\bb,n_k\in\Z$ with $n_1> \dots > n_k \geq 0$, we have
\begin{align}
\label{eq:est_sum_alpha}
	|\alpha^{-n_1} + \dots + \alpha^{-n_k}| &< 3 \quad \text{and} \\
\label{eq:est_sum_beta}
	|\beta^{n_1} + \dots + \beta^{n_k}| &< 3.
\end{align} 
\end{lem}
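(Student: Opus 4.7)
The plan is to bound each sum by a geometric series. Both inequalities will follow from the fact that $|\beta| = 1/\alpha$ together with the identity $\alpha^2 = \alpha + 1$, which makes the infinite geometric series $\sum_{j\geq 0}\alpha^{-j}$ very clean.

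First I would handle the sum $|\alpha^{-n_1} + \dots + \alpha^{-n_k}|$. Since $n_1, \dots, n_k$ are distinct non-negative integers, the set $\{n_1, \dots, n_k\}$ is a subset of $\{0,1,2,\dots\}$, so by the triangle inequality
\[
	|\alpha^{-n_1} + \dots + \alpha^{-n_k}|
	\leq \sum_{i=1}^k \alpha^{-n_i}
	\leq \sum_{j=0}^{\infty} \alpha^{-j}
	= \frac{\alpha}{\alpha - 1}.
\]
Now I would use $\alpha^2 = \alpha + 1$, which gives $\alpha - 1 = 1/\alpha$, hence $\alpha/(\alpha-1) = \alpha^2 = \alpha + 1 < 3$ because $\alpha < 2$.

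For the second inequality I would observe $|\beta| = 1/\alpha$, so $|\beta|^{n_i} = \alpha^{-n_i}$, and the triangle inequality reduces the estimate to exactly the same geometric sum already bounded above.

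There is no real obstacle here: the only ``trick'' is recognizing that $\alpha - 1 = 1/\alpha$ so the geometric series evaluates to $\alpha^2 < 3$, and of course that $\{n_i\}$ being strictly decreasing non-negative integers means we can dominate termwise by the full geometric series starting at exponent $0$.
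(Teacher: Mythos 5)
Your proof is correct and follows essentially the same route as the paper: bound termwise by the full geometric series $\sum_{j\geq 0}\alpha^{-j}$ for the first inequality, then reduce the second to the first via $|\beta| = 1/\alpha$. The only cosmetic difference is that you evaluate the series as $\alpha/(\alpha-1) = \alpha^2 = \alpha+1$ using the minimal polynomial, while the paper writes it as $1/(1-\alpha^{-1})$ and simply notes numerically that this is $< 3$; these are the same computation.
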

\begin{proof}
Inequality \eqref{eq:est_sum_alpha} follows from a simple estimation:
\begin{align*}
	0 \leq \alpha^{-n_1} + \dots + \alpha^{-n_k} 
	\leq \sum_{i=0}^{\infty} \alpha^{-i}
	= \frac{1}{1-\alpha^{-1}}
	<3.
\end{align*}
Then \eqref{eq:est_sum_beta} follows easily as well:
\[
	|\beta^{n_1} + \dots + \beta^{n_k}|
	\leq |\beta|^{n_1} + \dots + |\beta|^{n_k}
	= \alpha^{-n_1} + \dots + \alpha^{-n_k} 
	< 3.
\]
\end{proof}

The next lemma will be used to bound the coefficients in the linear forms in Section~\ref{sec:Matveev}. For simplicity, the estimates are very rough. Sharper estimates would improve the bound \eqref{eq:bound_after_steps} at the end of Section~\ref{sec:walking} only marginally. 

\begin{lem}\label{lem:n1n-an}
Let $(y, a, n_1 \bb n_k, n, m)$ be a solution to \eqref{eq:main-y} and \eqref{eq:main-ya} with $y, a, n_k \geq 2$ and $n_{i+1}\geq n_{i}+2$ for $i=1 \bb k-1$, as well as $n-2\geq m \geq 2$. Then we have
\[
	n_1 < n 
	\quad \text{and} \quad
	a < n.
\]	
\end{lem}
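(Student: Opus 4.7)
The two inequalities are both size bounds, so the natural approach is to compare $y^a$ with $F_n + F_m$ and apply the standard Fibonacci estimates
\[
	\alpha^{k-2} \leq F_k \leq \alpha^{k-1} \quad (k \geq 1),
\]
which follow immediately from the Binet formula and inequality~\eqref{eq:est_sum_beta}.

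For the bound $n_1 < n$, the plan is to use the assumption $a \geq 2$ together with the fact that $y$ is written in Zeckendorf form. Since all $F_{n_i}$ are positive, $y \geq F_{n_1}$, and hence
\[
	F_{n_1}^2 \leq y^2 \leq y^a = F_n + F_m.
\]
Using $m \leq n-2$, the right-hand side is bounded above by $F_n + F_{n-2} < F_n + F_{n-1} = F_{n+1}$. Applying the Binet-type inequalities gives
\[
	\alpha^{2n_1-4} \leq F_{n_1}^2 < F_{n+1} \leq \alpha^n,
\]
from which $n_1 < n/2 + 2$. Since $n \geq m+2 \geq 4$, this yields $n_1 < n$.

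For the bound $a < n$, the plan is analogous but simpler. From $y^a = F_n + F_m < F_{n+1} \leq \alpha^n$ and the assumption $y \geq 2$ (justified immediately before this lemma), taking logarithms gives
\[
	a \log y < n \log \alpha.
\]
Since $\alpha < 2 \leq y$, we have $\log \alpha < \log y$, so dividing by $\log y$ yields $a < n$.

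The proof is essentially routine, so there is no real obstacle—just one small point of care. The chain of inequalities for $n_1 < n$ is tight at $n=4$ (it only gives $n_1 < 4$), so one must verify $n \geq 4$, which is exactly what the standing assumption $n - 2 \geq m \geq 2$ guarantees.
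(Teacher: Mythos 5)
Your proof is correct, and for the first inequality it takes a genuinely different route from the paper's. The paper disposes of $n_1 < n$ in one line: since $y, a \geq 2$ we have $y^a > y$, and the ordering of Zeckendorf representations then forces the leading index of $y^a$ to strictly exceed that of $y$. You instead square, writing $F_{n_1}^2 \leq y^2 \leq y^a = F_n + F_m < F_{n+1}$, and invoke the explicit growth estimates $\alpha^{k-2} \leq F_k \leq \alpha^{k-1}$ to conclude $2n_1 - 4 < n$. Your argument is more computational but buys a stronger conclusion, namely $n_1 < n/2 + 2$, which you then weaken to $n_1 < n$ using $n \geq 4$ (and you are right that this last step is the one place that needs the hypothesis $n - 2 \geq m \geq 2$). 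For $a < n$ the two proofs are essentially the same comparison of $y^a$ against a power of $\alpha$; your version, dividing $a\log y < n\log\alpha$ by $\log y$ and using $\alpha < 2 \leq y$, is a touch cleaner than the paper's $2^a < 2\alpha^n$ followed by a numerical check. One small imprecision: the bounds $\alpha^{k-2} \leq F_k \leq \alpha^{k-1}$ do not really follow from inequality \eqref{eq:est_sum_beta} (which only gives $|\beta^n| < 3$); they come directly from the Binet formula and $|\beta| < 1$, or by a short induction. This is a citation quibble, not a gap.
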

\begin{proof}
Since $y^a > y$, it follows immediately from the properties of the Zeckendorf representation, that $n_1 < n$. 

For the second inequality note that we have $2^a \leq y^a = F_n + F_m < 2 F_n < 2 \alpha^n$, which implies $a < \log 2 + n \cdot \log \alpha / \log 2 <n$ for $n\geq m+2 \geq 4$.

\end{proof}

\subsection{Result related to the application of lower bounds for linear forms in logarithms}

Our proof will heavily rely on lower bounds for linear forms in logarithms. In order to switch from expressions of the shape $|\eta_1^{b_1} \cdots \eta_t^{b_t} - 1|$ to linear forms in logarithms of the shape $|b_1 \log \eta_1 + \dots + b_t \log \eta_t|$, we will use the following easy-to-check lemma.

\begin{lem}\label{lem:standard-linform}
	If $|x-1|\leq 0.5$, then $|\log x| \leq 2 |x-1|$.
\end{lem}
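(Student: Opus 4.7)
The statement is a classical tame bound for the logarithm near $1$, and the natural tool is the mean value theorem. My plan would be to simply observe that the hypothesis $|x-1|\le 0.5$ confines $x$ to the interval $[1/2, 3/2]$, on which the derivative of $\log$ is well controlled.

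More precisely, I would argue as follows. If $x=1$ the inequality is trivial, so assume $x\ne 1$. Applying the mean value theorem to $\log$ on the closed interval with endpoints $1$ and $x$ yields some $\xi$ strictly between $1$ and $x$ with
\[
  \log x \;=\; \log x - \log 1 \;=\; \frac{1}{\xi}\,(x-1).
\]
Since $\xi$ lies between $1$ and $x$, and $x\in[1/2, 3/2]$, we have $\xi \ge 1/2$, hence $1/\xi \le 2$. Taking absolute values gives $|\log x|\le 2|x-1|$, as desired.

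There is really no obstacle here: the only thing to be mindful of is that the bound $\xi \ge 1/2$ uses the lower end of the allowed range for $x$, which is precisely why the constant $2$ appears on the right-hand side (any hypothesis $|x-1|\le 1-1/c$ would give the analogous inequality $|\log x|\le c|x-1|$). An equivalent route would be to split into the cases $x\ge 1$ and $x<1$ and use the inequalities $\log(1+u)\le u$ for $u\ge 0$ and $\log(1+u)\ge 2u$ for $u\in[-1/2,0]$ (the latter verified by checking that $h(u)=\log(1+u)-2u$ satisfies $h(0)=0$ and $h'(u)=1/(1+u)-2\le 0$ on $[-1/2,0]$), but the mean-value-theorem approach is shorter and avoids any case distinction.
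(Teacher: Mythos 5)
The paper does not supply a proof of this lemma at all; it is introduced as an ``easy-to-check lemma'' and left to the reader. Your mean value theorem argument is correct and is exactly the standard way to verify it: from $|x-1|\le 0.5$ you get $x\in[1/2,3/2]$, the intermediate point $\xi$ lies between $1$ and $x$ so $\xi\ge 1/2$ and $1/\xi\le 2$, and the bound follows. So there is no discrepancy to report — your proof is a sound filling-in of a step the authors chose to omit.
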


In order to compute lower bounds for expressions of the shape $|b_1 \log \eta_1 + \dots + b_t \log \eta_t|$, we will use Matveev's popular result \cite[Corollary 2.3]{Matveev2000} because it is very good and easy to apply.

Let us first recall the definition of the height and some basic properties.
Let $ \eta $ be an algebraic number of degree $ d $ over the rationals, with minimal polynomial
	\begin{equation*}
		a_0 (X - \eta^{(1)}) \cdots (X - \eta^{(d)}) \in \Z[X].
	\end{equation*}
	Then the absolute logarithmic height of $ \eta $ is given by
	\begin{equation*}
		h(\eta) = \frac{1}{d} \left( \log a_0 + \sum_{i=1}^{d} \log \max \{1, |\eta^{(i)}|\} \right).
	\end{equation*}
For any algebraic numbers $ \eta_1 \bb \eta_t \in \overline{\Q} $ and $ z \in \Z $, the following well known properties hold:
	\begin{align*}
		h(\eta_1 + \dots + \eta_t) &\leq h(\eta_1) + \dots + h(\eta_t) + \log t, \\
		h(\eta^z) &= |z| \cdot  h(\eta).
	\end{align*}

\begin{knownthm}[Matveev, 2000]\label{thm:Matveev}
Let $\eta_1, \ldots, \eta_t$ be positive real algebraic numbers in a number field $K$ of degree $D$, let $b_1, \ldots b_t$ be rational integers and assume that
\[
	\Lambda := b_1 \log \eta_1 + \dots + b_t \log \eta_t \neq 0.
\]
Then 
\[
	\log |\Lambda|
	\geq -1.4 \cdot 30^{t+3} \cdot t^{4.5} \cdot D^2 (1 + \log D) (1 + \log B) A_1 \cdots A_t,
\]
where
\begin{align*}
	B &\geq \max \{|b_1|, \ldots, |b_t|\},\\
	A_i &\geq \max \{D h(\eta_i), |\log \eta_i|, 0.16\}
	\quad \text{for all }i = 1,\ldots,t.
\end{align*}
\end{knownthm}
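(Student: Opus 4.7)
The plan is to follow Baker's method of lower bounds for linear forms in logarithms, with the refinements that allow the remarkably mild dependence on $t$ visible in the constants $30^{t+3}\cdot t^{4.5}$. I would argue by contradiction: assume $\log|\Lambda|$ is strictly smaller than the claimed bound, and then show that a carefully chosen auxiliary analytic/arithmetic object must simultaneously be very small (by the smallness of $\Lambda$) and bounded below (because it is a nonzero algebraic integer), with the two bounds incompatible once the parameters are tuned. Without loss of generality one first normalizes: after relabelling so that $|b_j|$ is maximal and dividing by $b_j$, one may assume the largest coefficient equals $1$; after replacing $K$ by its Galois closure one may assume all $\eta_i$ lie in a single field of degree $D$, positivity being built into the statement.

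Next I would build the auxiliary object. Rather than constructing an auxiliary polynomial via Siegel's lemma (the original Baker approach), I would use Laurent's method of \emph{interpolation determinants}, as Matveev does. One chooses integer parameters $L, L_1, \dots, L_t$ with $L_i \asymp L / (D A_i)$ and forms an $N \times N$ determinant $\Delta$ whose $(i,\boldsymbol{\lambda})$-entry is
\[
\prod_{j=1}^{t} \eta_j^{\lambda_j \cdot s_i},
\]
with $\boldsymbol{\lambda}=(\lambda_1,\dots,\lambda_t)$ running over $\prod_j\{0,1,\dots,L_j\}$ and the shifts $s_i$ running over $\{0,1,\dots,N-1\}$, where $N = \prod_j (L_j+1)$. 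The analytic side uses that, if one writes $\eta_j = e^{\log \eta_j}$ and treats the row index as a complex variable, the corresponding function has a zero of order roughly $N$ at certain ``approximate'' common zeros coming from $\Lambda\approx 0$; a Schwarz lemma on a disc of radius proportional to $L B$ then forces $|\Delta|$ to be bounded above by a quantity of order $\exp\bigl(-c\,N L\bigr)\cdot(\text{size factor})$. The arithmetic side uses that $\Delta$ is, after clearing denominators, an algebraic integer in $K$, so either $\Delta=0$ or $|\Delta|\ge \prod_\sigma|\sigma\Delta|^{-1}$, and the conjugate determinants have size controllable via $h(\eta_j)\le A_j/D$. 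Combining these two bounds yields $|\Lambda|\ge \exp(-C_t\cdot D^2(1+\log D)(1+\log B)A_1\cdots A_t)$ with an explicit $C_t$ of order $30^{t+3}t^{4.5}$.

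The hard part is showing $\Delta\ne 0$, which is exactly where the hypothesis $\Lambda\ne 0$ must be used. I would invoke a zero estimate on the commutative algebraic group $\mathbb{G}_m^t$, in the form due to Philippon (building on Masser--W\"ustholz): if $\Delta$ vanished, then a suitable auxiliary polynomial $P \in K[X_1,\dots,X_t]$ of controlled multi-degree would vanish, with high multiplicity, on a long orbit of the one-parameter subgroup $s\mapsto(\eta_1^s,\dots,\eta_t^s)$, which forces the existence of a nontrivial multiplicative relation $\prod_j \eta_j^{c_j}=1$ with $|c_j|$ bounded — and such a relation contradicts $\Lambda\ne 0$ once the parameters are chosen. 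This is the delicate core of the argument and where Matveev's analysis diverges from earlier work: the quantitative zero estimate must be used with enough precision to keep the growth in $t$ polynomial rather than exponential. Once this step is secured, the final constants $1.4$, $30^{t+3}$, $t^{4.5}$, and the factor $(1+\log D)(1+\log B)$ fall out of a routine, if lengthy, optimization of $L$ and the $L_i$.
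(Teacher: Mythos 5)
This statement is not proved in the paper at all: it is quoted verbatim as a known external result (Matveev's Corollary~2.3, cited to \cite{Matveev2000}), so there is no internal argument to compare yours against, and what the paper needs from you here is nothing more than the correct statement and citation. Your text, by contrast, is an attempted sketch of Matveev's theorem itself, and as a proof it has genuine gaps beyond mere incompleteness. First, your opening normalization is not harmless: replacing $K$ by its Galois closure can raise the degree from $D$ to something as large as $D!$, and since the bound depends on $D^2(1+\log D)$ this is not ``without loss of generality''; likewise dividing through by the largest $|b_j|$ leaves the framework of integer coefficients. Second, and more seriously, your treatment of the nonvanishing of the interpolation determinant is wrong as stated: you argue that the zero estimate on $\mathbb{G}_m^t$ would produce a bounded multiplicative relation $\prod_j \eta_j^{c_j}=1$, ``and such a relation contradicts $\Lambda\ne 0$.'' It does not. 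The theorem assumes only $\Lambda\ne 0$, not multiplicative independence of the $\eta_j$, and multiplicatively dependent $\eta_j$ are entirely compatible with $\Lambda\ne 0$. Handling this case (by an induction on $t$ or a careful descent that preserves the explicit constants) is one of the delicate points of any proof of this type, and your sketch has no mechanism for it.

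Beyond that, everything that makes the statement what it is --- the explicit constants $1.4\cdot 30^{t+3}t^{4.5}$, the precise roles of $B$ and of $A_i\ge\max\{Dh(\eta_i),|\log\eta_i|,0.16\}$, the parameter choices and the extrapolation/zero-estimate bookkeeping --- is asserted to ``fall out of a routine optimization,'' which is precisely the content of Matveev's long paper and cannot be waved through. (As a side remark, Matveev's actual argument is a refinement of Baker's auxiliary-function method with his own device for avoiding Kummer descent, rather than a straightforward application of Laurent's interpolation determinants, so the attribution in your sketch is also off.) For the purposes of this paper the correct move is simply to cite the result, as the authors do; if you wish to prove it, the nonvanishing step and the explicit quantitative analysis must be supplied in full.
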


In order to be able to apply Theorem~\ref{thm:Matveev}, one has to check that the linear form $\Lambda$ does not vanish. This often requires some tricks. In Section~\ref{sec:Matveev} we will make use of the following lemma.

\begin{lem}\label{lem:teilbarkeitSqrt5}
Let $x$ be an odd positive integer. Then $\alpha^x +1$ is not divisible by $\sqrt{5}$ (in the principal ideal domain $\Z[\alpha]$, where $\alpha =\frac{1+\sqrt{5}}{2}$).
\end{lem}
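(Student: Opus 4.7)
The plan is to reduce the question modulo the prime ideal $(\sqrt{5}) \subset \Z[\alpha]$ and examine the residue of $\alpha^x + 1$ in the finite field $\Z[\alpha]/(\sqrt{5}) \cong \mathbb{F}_5$. Here $\Z[\alpha]$ is the ring of integers of the real quadratic field $\Q(\sqrt{5})$, and since $5$ ramifies as $(5) = (\sqrt{5})^2$, the ideal $(\sqrt{5})$ is indeed prime and its residue field has order $5$.

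To locate the image $\bar\alpha \in \mathbb{F}_5$, I would exploit the elementary identity $\sqrt{5} = 2\alpha - 1$ (visible directly from $\alpha = (1+\sqrt{5})/2$), which gives $2\alpha \equiv 1 \pmod{\sqrt{5}}$ and hence $\bar\alpha = 2^{-1} = 3$ in $\mathbb{F}_5$. The statement then reduces to the purely elementary claim that $3^x + 1 \not\equiv 0 \pmod 5$ whenever $x$ is odd. Since the multiplicative order of $3$ modulo $5$ is $4$, the congruence $3^x \equiv -1 \pmod{5}$ holds if and only if $x \equiv 2 \pmod{4}$, which in particular forces $x$ to be even, contradicting the hypothesis on $x$.

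I do not foresee any real obstacle; the only ingredient beyond a one-line computation mod $5$ is the standard fact that $(\sqrt{5})$ is a prime ideal of $\Z[\alpha]$ with residue field $\mathbb{F}_5$. An alternative route, thematically closer to the rest of the paper, would be to assume $\sqrt{5} \mid \alpha^x + 1$, apply the nontrivial Galois automorphism of $\Q(\sqrt{5})/\Q$ to get $\sqrt{5} \mid \beta^x + 1$, and then multiply the two divisibilities to obtain
\[
  5 \;\bigm|\; (\alpha^x+1)(\beta^x+1) \;=\; (\alpha\beta)^x + (\alpha^x+\beta^x) + 1 \;=\; (-1)^x + L_x + 1 \;=\; L_x
\]
for odd $x$, where $L_x$ is the $x$-th Lucas number. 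Since the Lucas sequence modulo $5$ is periodic with period $4$ and cycles through $2,1,3,4$, it is never divisible by $5$, yielding the desired contradiction.
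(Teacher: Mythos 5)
Both of your arguments are correct, and your ``alternative route'' is in fact exactly the proof the paper gives: the authors compute the norm $N_{\Q(\sqrt{5})/\Q}(\alpha^x+1) = (\alpha^x+1)(\beta^x+1) = (-1)^x + L_x + 1 = L_x$ for odd $x$ and then observe that the Lucas sequence modulo $5$ cycles through $2,1,3,4$ and never hits $0$. Your primary proposal, however, is a genuinely different and somewhat slicker route: rather than passing to the global norm down to $\Z$, you reduce directly in the residue field $\Z[\alpha]/(\sqrt{5}) \cong \mathbb{F}_5$, identify $\bar\alpha = 3$ from the identity $\sqrt{5} = 2\alpha - 1$, and reduce the whole lemma to the one-line fact that $3$ has multiplicative order $4$ in $\mathbb{F}_5^\times$, so $3^x \equiv -1$ forces $x \equiv 2 \pmod 4$, hence $x$ even. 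The payoff of your local computation is that it exposes exactly \emph{why} the parity of $x$ is the relevant hypothesis (it is the residue of $x$ modulo the order of $\bar\alpha$), whereas the norm-and-Lucas argument hides this behind the identity $L_x = \alpha^x + \beta^x$ and a check of the Lucas sequence mod $5$. The paper's version, on the other hand, has the advantage of staying entirely inside the Fibonacci--Lucas toolkit that the rest of the paper uses, and requires no discussion of the splitting of $5$ or the structure of $\Z[\alpha]/(\sqrt{5})$. Either proof is complete and correct.
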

\begin{proof}
Let $x$ be an odd positive integer.
If $\alpha^x +1$ were divisible by $\sqrt{5}$, then the norm $N_{\Q(\sqrt{5})/\Q}(\alpha^x+1)$ would be divisible by 5. Let us compute the norm:
\begin{align*}
	N_{\Q(\sqrt{5})/\Q}(\alpha^x+1)
	= (\alpha^x+1)(\beta^x+1)
	= (\alpha \beta)^x + (\alpha^x + \beta^x) + 1
	= (-1)^x + L_x + 1
	= L_x,
\end{align*}
where $L_x$ is the $x$-th Lucas number (the Lucas numbers are defined by $L_0=2$, $L_1=1$ and $L_n= L_{n-1} + L_{n-2}$ for $n\geq 2$, and they indeed have the Binet representation $L_n=\alpha^n + \beta^n$). Modulo 5 the Lucas sequence looks like this: 
$2,1,3,4,2,1,\ldots$
and in particular no Lucas number is divisible by 5. Thus  $N_{\Q(\sqrt{5})/\Q}(\alpha^x+1)$ is not divisible by 5 for odd $x$, and $\alpha^x+1$ cannot be divisible by $\sqrt{5}$.
\end{proof}

Finally, after the repeated application of lower bounds for linear forms in logarithms, we will end up with an inequality of the shape $n \leq c \cdot (\log n)^x$ by the end of Section~\ref{sec:walking}. In order to obtain an absolute upper bound for $n$, we will use the following lemma in Section~\ref{sec:finish}.

\begin{lem}\label{lem:ungl}
Let $\delta>0$ and let $n, c, x  \in \R_{\geq 1}$ satisfy the inequality
\begin{equation}\label{eq:lem:ungl}
	n \leq c \cdot (\log n)^x.
\end{equation}
Then
\[
	n \leq \max \left\{ 
		\exp (\exp ( ( 1+ \delta^{-1})^2) ),\		
		2^x \cdot c \cdot (\log c)^x,\
		(2x)^{(1+\delta)x} \cdot c
		\right\}.
\]
\end{lem}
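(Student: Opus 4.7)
The plan is to take the logarithm of \eqref{eq:lem:ungl}, which gives $L \leq \log c + x\log L$ with $L := \log n$, and then split according to which summand on the right dominates. If $\log c \geq x\log L$, then $L \leq 2\log c$; substituting this back into \eqref{eq:lem:ungl} yields $n \leq c(2\log c)^x = 2^x c (\log c)^x$, which matches the second entry in the maximum.

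Otherwise $\log c < x\log L$, and so $L < 2x\log L$. Here I would further distinguish two sub-cases on the size of $L$. If $L \leq \exp((1+\delta^{-1})^2)$, then immediately $n \leq \exp(\exp((1+\delta^{-1})^2))$, which is the first entry. If instead $L > \exp((1+\delta^{-1})^2)$, so that $\ell := \log L > (1+\delta^{-1})^2$, I would argue that $L$ is automatically small compared to $x$: taking the logarithm of $L < 2x\log L$ gives $\ell - \log\ell < \log(2x)$, and the aim is to sharpen this into $\ell < (1+\delta)\log(2x)$; inserting the resulting $L < (2x)^{1+\delta}$ into \eqref{eq:lem:ungl} then produces $n \leq cL^x < c\cdot (2x)^{(1+\delta)x}$, the third entry.

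The sharpening rests on the auxiliary estimate $\log \ell \leq \ell/A$ for all $\ell \geq A^2$, where $A := 1+\delta^{-1}$. To prove it, set $f(\ell) := \ell/A - \log\ell$; since $f'(\ell) = 1/A - 1/\ell \geq 0$ on $[A,\infty)$ and $A^2 \geq A$, it suffices to verify $f(A^2) = A - 2\log A \geq 0$. The latter is a standard one-variable calculus check for $A \geq 1$, with minimum $2 - 2\log 2 > 0$ attained at $A = 2$. Given $\log\ell \leq \ell/A = \delta\ell/(1+\delta)$, the inequality $\ell - \log\ell < \log(2x)$ rearranges into $\ell/(1+\delta) < \log(2x)$, which is exactly $\ell < (1+\delta)\log(2x)$, as desired.

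The main (mild) obstacle is bookkeeping: the threshold $\exp((1+\delta^{-1})^2)$ must be chosen precisely so that the binding condition $A - 2\log A \geq 0$ in the auxiliary estimate yields the factor $(1+\delta)$ in the third entry of the maximum, rather than a cruder constant. The trivial boundary cases (e.g.\ $n \leq e$) are absorbed into any of the three expressions, all of which are manifestly at least $1$.
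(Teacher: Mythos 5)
Your proof is correct and follows essentially the same route as the paper: take logarithms of the hypothesis, split into cases according to which term dominates, and in the remaining case sharpen $\ell - \log\ell < \log(2x)$ to $\ell < (1+\delta)\log(2x)$ via the auxiliary bound $\log\ell \leq \delta\ell/(1+\delta)$, valid once $\ell \geq (1+\delta^{-1})^2$, arriving at the same three expressions in the maximum. The only difference is technical and minor: you verify the auxiliary bound by a monotonicity argument on $\ell/A - \log\ell$ with $A = 1+\delta^{-1}$, whereas the paper proves its contrapositive by taking one further logarithm and invoking $\log y < y/2$.
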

\begin{proof}
Let $n,c,x$ be numbers $\geq 1$ that satisfy \eqref{eq:lem:ungl}.

First, note that if $n \leq e^e$, then we have $n < \exp (\exp ( ( 1+ \delta^{-1})^2) )$ and we are done immediately. Therefore, we may assume that $n > e^e$, which implies
\begin{equation}\label{eq:nee}
	\log \log \log n > 0.
\end{equation}

Next, assume for a moment that $\delta / (1+ \delta) \cdot \log \log n \leq \log \log \log n$. Taking logarithms, we obtain
\[
	\log (\delta / (1+ \delta)) + \log \log \log n
	\leq \log \log \log \log n.
\]
Since $\log y < y/2$ for any $y>0$, and $\log \log \log n > 0$ by \eqref{eq:nee}, we obtain
\[
	\log (\delta / (1+ \delta)) + \log \log \log n
	\leq \log \log \log \log n
	< (\log \log \log n)/2,
\]
which implies
\[
	\log \log \log n
	< 2 \log ((1+ \delta)/\delta) 
	= 2 \log (1 + \delta^{-1}).
\]
But this immediately implies $n < \exp (\exp ( ( 1+ \delta^{-1})^2) )$ and we are done. Therefore, we may from now on assume that
\begin{equation}\label{eq:nnotlastbound}
	\log \log \log n  
	< \delta / (1+ \delta) \cdot \log \log n.
\end{equation}

Now we consider inequality \eqref{eq:lem:ungl} and take logarithms, obtaining
\begin{equation}\label{eq:lem:ungl-log}
	\log n  \leq \log c + x \log \log n.
\end{equation}

Assume for the moment that $x \log \log n \leq (\log n)/{2}$. Then the above inequality implies
\begin{equation*}
	\log n \leq 2 \log c,
\end{equation*}
which plugging back into \eqref{eq:lem:ungl} immediately yields $n\leq 2^x \cdot c \cdot (\log c)^x$ and we are done as well.

Finally, we consider the case that $x \log \log n > (\log n)/{2}$, i.e.\ $\log n < 2 x \log \log n$. Taking logarithms, we obtain
\[
	\log \log n 
	< \log (2x) + \log \log \log n.
\]
Together with \eqref{eq:nnotlastbound} this yields
\[
	\log \log n 
	< \log (2x) + \delta / (1+ \delta) \cdot \log \log n,
\]
which implies
\[
	\left( 1 - \frac{\delta}{1+\delta} \right) \log \log n 
	= \frac{1}{1+\delta} \cdot \log \log n 
	< \log (2x),
\]
and then
\[
	\log \log n 
	< (1+\delta) \log (2x).
\]
Plugging this into \eqref{eq:lem:ungl-log}, we obtain
\[
	\log n 
	< \log c + x \cdot (1+\delta) \log (2x),
\]
which implies
\[
	n
	< c \cdot (2x)^{(1+\delta)x}.
\]
\end{proof}

\section{Constructing the basic linear forms in logarithms}\label{sec:linforms}

In this section, we construct $k$ linear forms in logarithms from equation~\eqref{eq:main-y} and $k$ linear forms from equation~\eqref{eq:main-ya}. In the third subsection we give an overview of all the linear forms in logarithms and their upper bounds.

\subsection{Linear forms coming from equation \eqref{eq:main-y}}
Using the Binet formula, we can rewrite equation~\eqref{eq:main-y} as
\begin{equation*}\label{eq:main-y-binet}
	y = \frac{\alpha^{n_1} - \beta^{n_1}}{\sqrt{5}} + \dots + \frac{\alpha^{n_k} - \beta^{n_k}}{\sqrt{5}}.
\end{equation*}
Multiplication by $\sqrt{5}$ yields
\begin{equation}\label{eq:main-y-umgef}
	y\sqrt{5} = \alpha^{n_1} + \dots + \alpha^{n_k} - (\beta^{n_1} + \dots + \beta^{n_k}).
\end{equation}
To obtain the first linear form in logarithms, we shift the largest power $\alpha^{n_1}$ to the left hand side, take absolute values and use Lemma~\ref{lem:L}:
\begin{align*}
	| y \sqrt{5} - \alpha^{n_1} | 
	&= |\alpha^{n_2} + \dots + \alpha^{n_k} - (\beta^{n_1} + \dots + \beta^{n_k})| \\
	&\leq |\alpha^{n_2} ( \alpha^{n_3 - n_2} + \dots + \alpha^{n_k - n_2} )| + |\beta^{n_1} + \dots + \beta^{n_k}|\\
	&\leq \alpha^{n_2} \cdot 3 + 3\\
	&\leq 6 \alpha^{n_2}.
\end{align*}
Now we divide by $\alpha^{n_1}$ and obtain
\[
	\left| \frac{y \sqrt{5}}{\alpha^{n_1}} - 1 \right|
	\leq 6 \alpha^{-(n_1-n_2)}.
\]
If $n_1-n_2\geq 6$, then the above expressions are $\leq 0.5$, so
with Lemma \ref{lem:standard-linform} we get
\begin{equation*}\label{eq:linform1}
	| \log y + \sqrt{5} - n_1 \log \alpha |
	\leq 12 \alpha^{-(n_1-n_2)}.
\end{equation*}
This is our first linear form in logarithms. Next, we construct more of them by shifting more powers of $\alpha$ to the left hand side.

Let us go back to \eqref{eq:main-y-umgef} and shift the largest $\ell$ powers $\alpha^{n_1} \bb \alpha^{n_\ell}$ to the left ($2\leq \ell \leq k-1$). Then, as above, we obtain
\begin{align*}
	| y\sqrt{5} - (\alpha^{n_1} + \dots + \alpha^{n_\ell}) |
	= |\alpha^{n_{\ell+1}} + \dots + \alpha^{n_k} - (\beta^{n_1} + \dots + \beta^{n_k})|
	\leq 6 \alpha^{n_{\ell+1}}.
\end{align*}
Dividing by $\alpha^{n_1} + \dots + \alpha^{n_\ell} = \alpha^{n_1}(1 + \alpha^{n_2-n_1} + \dots + \alpha^{n_\ell - n_1}) \geq \alpha^{n_1}$ we obtain
\begin{equation*}
		\left| 
			\frac{y \sqrt{5}}{\alpha^{n_1}(1 + \alpha^{n_2-n_1} + \dots + \alpha^{n_\ell - n_1})}
		\right| 
		\leq 6 \alpha^{-(n_1 - n_{\ell + 1})}.
\end{equation*}
Then, if $n_1 - n_{\ell+1} \geq 6$, by Lemma \ref{lem:standard-linform}  we have
\begin{equation*}
	|\log y + \log \sqrt{5} - n_1 \log \alpha - \log (1 + \alpha^{n_2-n_1} + \dots + \alpha^{n_\ell - n_1})|
	\leq 12\alpha^{-(n_1 - n_{\ell + 1})}.
\end{equation*}

Finally, let us construct the last linear form in logarithms by shifting all powers of $\alpha$ to the left hand side. Then from equation \eqref{eq:main-y-umgef} we obtain
\begin{align*}
	| y\sqrt{5} - (\alpha^{n_1} + \dots + \alpha^{n_k}) |
	= |\beta^{n_1} + \dots + \beta^{n_k}|
	\leq 3
	\leq 6.
\end{align*}
Dividing by $\alpha^{n_1} + \dots + \alpha^{n_k} = \alpha^{n_1} (1 + \alpha^{n_2-n_1} + \dots + \alpha^{n_k - n_1}) \geq \alpha^{n_1}$, we obtain
\begin{equation*}
		\left| 
			\frac{y \sqrt{5}}{\alpha^{n_1}(1 + \alpha^{n_2-n_1} + \dots + \alpha^{n_k - n_1})}
		\right| 
		\leq 6 \alpha^{-n_1}.
\end{equation*}
Then, as above, if $n_1 \geq 6$, with Lemma \ref{lem:standard-linform}  we obtain
\begin{equation*}
	|\log y + \log \sqrt{5} - n_1 \log \alpha - \log (1 + \alpha^{n_2-n_1} + \dots + \alpha^{n_k - n_1})|
	\leq 12\alpha^{-n_1}.
\end{equation*}

\subsection{Linear forms coming from Equation \eqref{eq:main-ya}}

We rewrite equation \eqref{eq:main-ya} using the Binet formula:
\begin{equation*}\label{eq:main-ya-binet}
	y^a = \frac{\alpha^{n} - \beta^{n}}{\sqrt{5}} + \frac{\alpha^{m} - \beta^{m}}{\sqrt{5}}.
\end{equation*}
Then, by the same procedure as above, if $n-m\geq 6$ and $n\geq 6$, we obtain the two linear forms
\begin{align*}
	| a \log y + \sqrt{5} - n \log \alpha |
	&\leq 12 \alpha^{-(n-m)}
	\quad \text{and}\\
	|a \log y + \log \sqrt{5} - m \log \alpha - \log (\alpha^{n-m} + 1)|
	&\leq 12\alpha^{-n}.
\end{align*}

\subsection{Overview of basic linear forms}

Let us sum up all the linear forms in logarithms that we have constructed and name them in an appropriate way: Let us denote the $k$
linear forms coming from equation~\eqref{eq:main-y} by $\Lambda_{A1} \bb \Lambda_{Ak}$, and the two linear forms coming from equation~\eqref{eq:main-ya} by $\Lambda_{B1}, \Lambda_{B2}$. Specifically, for $2\leq l \leq k-1$, we have
\begin{align*}
|\Lambda_{A1}|:=	
	| \log y + \log \sqrt{5} - n_1 \log \alpha | 
		&\leq 12 \alpha^{-(n_1-n_2)},\\
|\Lambda_{A\ell}|:=		
	|\log y + \log \sqrt{5} - n_1 \log \alpha - \log (1 + \alpha^{n_2-n_1} + \dots + \alpha^{n_\ell - n_1})|
		&\leq 12\alpha^{-(n_1 - n_{\ell + 1})}, \\
|\Lambda_{Ak}|:=	
	|\log y + \log \sqrt{5} - n_1 \log \alpha - \log (1 + \alpha^{n_2-n_1} + \dots + \alpha^{n_k - n_1})|
		&\leq 12\alpha^{-n_1},\\
|\Lambda_{B1}|:=
	| a \log y + \log \sqrt{5} - n \log \alpha |
		&\leq 12 \alpha^{-(n-m)},\\
|\Lambda_{B2}|:=	
	|a \log y + \log \sqrt{5} - m \log \alpha - \log (\alpha^{n-m} + 1)|
	&\leq 12\alpha^{-n}.
\end{align*}
The upper bounds are all of the shape $12 \alpha^{-X}$.
Mind that each of the inequalities only holds if $X \geq 6$. However, the goal will always be to bound $X$ from above, so if $X\leq 6$, we will just skip the corresponding step.

\section{Eliminating $y$ from the linear forms and overview of the steps}\label{sec:elim}

In each of the linear forms $\Lambda_{A1} \bb \Lambda_{Ak}, \Lambda_{B1}, \Lambda_{B2}$ we have one logarithm that is unknown, namely $\log y$. Therefore, we will always take two distinct linear forms and eliminate $\log y$. For example, if we take  $\Lambda_{A1}$ and $\Lambda_{B1}$ and eliminate $\log y$, we get a new linear form $\Lambda_{A1}^*$ in fixed logarithms with a bound of the shape $|\Lambda_{A1}^*| \leq a \cdot 12 \alpha^{-(n_1-n_2)} + 12 \alpha^{-(n-m)}$. After computing a lower bound for $|\Lambda_{A1}^*|$ with Matveev's theorem, we then obtain an upper bound either for $n_1-n_2$ or for $n-m$, depending on which one is smaller. This leads to different cases and in each case we have to continue with slightly different steps. Figure~\ref{fig:steps} shows an overview of the steps. Each rectangular box stands for a step, where we take the two linear forms written in the box and eliminate $y$. The obtained upper bound is written along the arrow that points to the next step.
In the steps on the left (Steps A1 to A$k$), there are always two cases, depending on whether $n_1-n_\ell$ (or $n_1$) or $n-m$ is smaller. If we ever cross over to the steps on the right (Steps B1 to B$k$), then we just follow the arrows pointing downwards, always obtaining bounds for $n_1-n_\ell$ (or $n_1$).
The purpose of Figure~\ref{fig:steps} is to give a rough idea of the proof. There will be a more detailed figure in Section~\ref{sec:walking}.
We now construct and estimate all the linear forms in logarithms that are used in the steps.

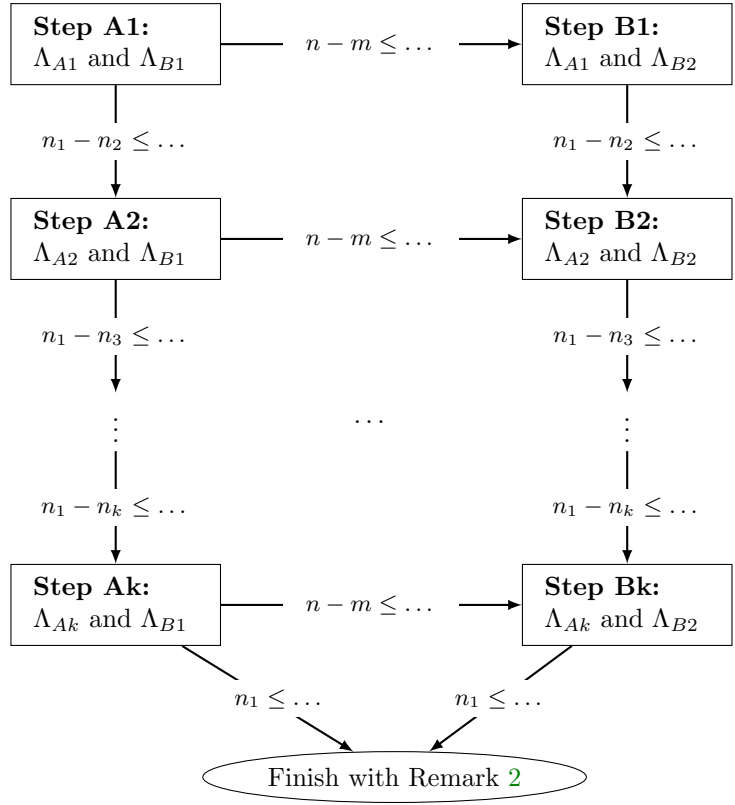
\begin{figure}[h]
\begin{tikzpicture}[auto,node distance=1.5cm]

\mybox{A1}{}
{$\Lambda_{A1}$ and $\Lambda_{B1}$}

\mybox{A2}{below =of stepA1}
{$\Lambda_{A2}$ and $\Lambda_{B1}$}

\node[](dotsleft)[below = of stepA2] {\vdots};

\mybox{Ak}{below =of dotsleft}
{$\Lambda_{Ak}$ and $\Lambda_{B1}$}

\mybox{B1}{right = 4cm of stepA1}
{$\Lambda_{A1}$ and $\Lambda_{B2}$}

\mybox{B2}{right = 4cm of stepA2}
{$\Lambda_{A2}$ and $\Lambda_{B2}$}

\node[](dotsright)[below = of stepB2] {\vdots};

\mybox{Bk}{right = 4cm of stepAk}
{$\Lambda_{Ak}$ and $\Lambda_{B2}$}

\node (dotsmiddle) at ($(dotsleft)!0.5!(dotsright)$) {\dots};

\node[ellipse, draw] (doneEnd) [below right = 1.5cm and 0.5cm of stepAk] 
{Finish with Remark \ref{rem:strategy}};


\pfeilrueber{stepA1}{stepB1}{$n-m \leq \dots$}
\pfeilrueber{stepA2}{stepB2}{$n-m \leq \dots$}
\pfeilrueber{stepAk}{stepBk}{$n-m \leq \dots$}

\pfeilrunter{stepA1}{stepA2}{$n_1-n_2$ $\leq \dots$}
\pfeilrunter{stepA2}{dotsleft}{$n_1-n_3$ $\leq \dots$}
\pfeilrunter{dotsleft}{stepAk}{$n_1-n_{k}$ $\leq \dots$}

\pfeilrunter{stepB1}{stepB2}{$n_1-n_2$ $\leq \dots$}
\pfeilrunter{stepB2}{dotsright}{$n_1-n_3$ $\leq \dots$}
\pfeilrunter{dotsright}{stepBk}{$n_1-n_k$ $\leq \dots$}

\draw [-latex,thick] (stepAk) --  node [fill=white, anchor=center, pos=0.5] {\small{\quad $n_1 \leq \dots$}} (doneEnd) ;
\pfeilrunter{stepBk}{doneEnd}{$n_1 \leq \dots$}

\end{tikzpicture}
\caption{Overview of steps}
\label{fig:steps}
\end{figure}

Let us start with the steps on the left (Step A1 to Step A$k$).
In Step A1 we eliminate $\log y$ by computing the new linear form $\Lambda_{A1}^* := a \Lambda_{A1} - \Lambda_{B1}$:
\begin{align*}
	|\Lambda_{A1}^*|
	= |a \Lambda_{A1} - \Lambda_{B1}|
	&= | (a-1) \log \sqrt{5} + (n - a n_1) \log \alpha| \\
	&\leq a \cdot 12 \alpha^{-(n_1-n_2)} + 12 \alpha^{-(n-m)}
	\leq 18 a \alpha^{-\min\{n_1-n_2,n-m\}}, 
\end{align*}
where we used $a\geq 2$.
In the same way we also obtain the linear forms for Steps A2 to A$k$:\
\begin{align*}
	| \Lambda_{A\ell}^*|
	:= | a \Lambda_{A\ell} - \Lambda_{B1} |
	&= | (a-1) \log \sqrt{5} + (n - a n_1) \log \alpha - a \log (1 + \alpha^{n_2-n_1} + \dots + \alpha^{n_\ell - n_1}) | \\
	&\leq 18 a \alpha^{-\min\{n_1-n_{\ell+1},n-m\}}
	\quad \text{for } l= 2 \bb k-1,\\
	|\Lambda_{Ak}^*| 
	:= | a \Lambda_{Ak} - \Lambda_{B1}|
	& = | (a-1) \log \sqrt{5} + (n - a n_1) \log \alpha - a \log (1 + \alpha^{n_2-n_1} + \dots + \alpha^{n_k - n_1}) | \\
	& \leq 18 a \alpha^{-\min\{n_1,n-m\}}.\nonumber
\end{align*}

Analogously, we construct and estimate the linear forms for Steps B1 to B$k$. Note that $n_1-n_\ell<n_1 <n$, so clearly $\alpha^{-n} < \alpha^{-(n_1-n_\ell)}$ for any $\ell$ and we don't need to write any minima.
 
\begin{align}
|\Lambda_{B1}^*| \nonumber 
	:= |a \Lambda_{A1} - \Lambda_{B2}|
	&= |(a-1)\log \sqrt{5} + (m-an_1)\log \alpha + \log(\alpha^{n-m} +1)|\\
	&\leq a \cdot 12\alpha^{-(n_1-n_2)} + 12 \alpha^{-n}
	\leq 18a \alpha^{-(n_1-n_2)}, \nonumber \\
|\Lambda_{B\ell}^*| \nonumber 
	:= |a \Lambda_{A\ell} - \Lambda_{B2}|
	&= |(a-1)\log \sqrt{5} + (m-an_1)\log \alpha  \\
		& \phantom{= |} - a \log (1 + \alpha^{n_2-n_1} + \dots + \alpha^{n_\ell - n_1}) + \log(\alpha^{n-m} +1)| \nonumber \\
	&\leq 18a \alpha^{-(n_1-n_{\ell+1})},
	\quad \text{for } l= 2 \bb k-1, \nonumber \\
|\Lambda_{Bk}^*| \nonumber 
	:= |a \Lambda_{Ak} - \Lambda_{B2}|
	&= |(a-1)\log \sqrt{5} + (m-an_1)\log \alpha  \\
		& \phantom{= |} - a \log (1 + \alpha^{n_2-n_1} + \dots + \alpha^{n_k - n_1}) + \log(\alpha^{n-m} +1)| \nonumber \\
	&\leq 18a \alpha^{-n_1}. \label{eq:Lambdabkstar}
\end{align}

Mind that each of these estimates only holds if we have $X\geq 6$ for the exponent in the upper bound $18 a \alpha^{-X}$.  

\section{Application of Matveev's theorem}\label{sec:Matveev}

In this section we apply Matveev's theorem to all the linear forms  $\Lambda_{A\ell}^*, \Lambda_{B\ell}^*$, that we obtained in the previous section. Moreover, we compare the lower bounds to the upper bounds and compute general bounds for the exponents $n_1 - n_\ell$, $n_1$ or $n-m$.

First, we check that the linear forms $\Lambda_{A1}^*\bb \Lambda_{Ak}^*, \Lambda_{B1}^* \bb  \Lambda_{Bk}^*$ are non-zero. Note that in each linear form we have the expression $(a-1)\log \sqrt{5}$ and $a-1\neq 0$. Thus, if a linear form were zero, $\log \sqrt{5}$ would have to be canceled out by the other logarithms. In particular, since $\sqrt{5}$ is prime in $\Z[\alpha]$, it would have to divide at least one argument of a logarithm.
We only have three other types of logarithms: $\log \alpha$, $\log (\alpha^{n-m}+1)$ and $\log (1 + \alpha^{n_2-n_1} + \dots + \alpha^{n_\ell - n_1}) $.
First, since $\alpha$ is a unit in $\Z[\alpha]$, it is not divisible by $\sqrt{5}$, so $\log \alpha$ does not contribute to the cancellation of $\log \sqrt{5}$. 
Second, by Lemma~\ref{lem:teilbarkeitSqrt5}, the expression $\alpha^{n-m}+1$ is not divisible by $\sqrt{5}$ unless $n-m$ is even. 
If $n-m$ is even, we are immediately done with Theorem~\ref{thm:LucaPatel}, so let us assume that $n-m$ is odd. Thus $\log (\alpha^{n-m}+1)$ does not contribute to the cancellation of $\log \sqrt{5}$ either.
And third, the expression $(1 + \alpha^{n_2-n_1} + \dots + \alpha^{n_\ell - n_1})$ might be divisible by $\sqrt{5}$. However, the coefficient of $\log (1 + \alpha^{n_2-n_1} + \dots + \alpha^{n_\ell - n_1}) $ is always $-a$. So if $\log \sqrt{5}$ canceled out completely, we would need to have $(a-1) - a \cdot v_{\sqrt{5}}(1 + \alpha^{n_2-n_1} + \dots + \alpha^{n_\ell - n_1})=0$, which is clearly impossible. Here $v_{\sqrt{5}}(\cdot)$ denotes the valuation on $\Q(\sqrt{5})$ associated with the prime ideal $(\sqrt{5})$, normalized by $v_{\sqrt{5}}(\sqrt{5})=1$.

Therefore, all the linear forms $\Lambda_{A1}^*\bb \Lambda_{Ak}^*, \Lambda_{B1}^* \bb  \Lambda_{Bk}^*$ are non-zero and we can apply Matveev's  theorem to each of them. Each linear form has an upper bound of the shape $18 a \alpha^{-X}$ and in each step we compare the lower and the upper bound to obtain a bound for the expression $X$.

We start by describing the last step, because the lower bound for the $\Lambda_{Bk}^*$ will be the weakest, so for simplicity we will be able to reuse it in the other steps. 
Of course, one can obtain sharper bounds by considering each linear form separately, in particular for the linear form $\Lambda_{A1}^*$ in only two logarithms.

\subsection{Step Bk}

Assume that we already have an upper bound 
$T_{k}$ for $n_1-n_k$ and an upper bound $S$ for $n-m$. 
We want to apply Matveev's theorem (Theorem~\ref{thm:Matveev}) to $\Lambda_{Bk}^*$ with 
$t=4$ and
\begin{align*}
	\eta_1 &= \sqrt{5}, \quad && b_1 = a-1,\\
	\eta_2 &= \alpha , \quad && b_2 = m-an_1,\\
	\eta_3 &= 1 + \alpha^{n_2-n_1} + \dots + \alpha^{n_k - n_1}, \quad && b_3 = -a,\\
	\eta_4 &= \alpha^{n-m} + 1, \quad && b_4 = 1.
\end{align*}
The four numbers $\eta_1, \eta_2, \eta_3, \eta_4$ are real, positive and belong to 
$K=\Q(\sqrt{5})$, so we can take $D=2$. As stated in Lemma~\ref{lem:n1n-an}, we have $a\leq n$ and $n_1\leq n$, so $\max\{|b_1|, |b_2|,|b_3|, |b_4|\}\leq n^2$ and we can set $B=n^2$. Since $h(\sqrt{5})=(\log 5)/2$ and $h(\alpha)= (\log \alpha)/2$, we can set $A_1= \log 5$ and $A_2=\log \alpha$. Finally, we estimate (rather roughly) the heights of $\eta_3$ and $\eta_4$. Recall that we are assuming $n_1-n_k\leq T_k$ and $n-m \leq S$.
\begin{align*}
h(1 + \alpha^{n_2-n_1} + \dots + \alpha^{n_k - n_1})
	&\leq h(1) + h(\alpha^{n_2-n_1}) + \dots + h(\alpha^{n_k - n_1}) + \log k\\
	&= 0 + (n_1-n_2) h(\alpha) + \dots + (n_1 - n_k) h(\alpha) + \log k \\
	&\leq (k-1) (n_1-n_k) \frac{\log \alpha}{2} + \log k\\
	&\leq k T_k.\\
h(\alpha^{n-m}+1)
	&\leq (n-m) \frac{\log \alpha}{2} + \log 2
	\leq S.
\end{align*}

Thus we can set $A_3=2kT_k$ and $A_4=2S$. Now 
Matveev's theorem (Theorem~\ref{thm:Matveev}) tells us that
\begin{equation*}\label{eq:matvtoLambdak+k}
	\log |\Lambda_{Bk}^*|
		\geq - 1.4 \cdot 30^{4+3} \cdot 4^{4.5} \cdot 2^2 (1+\log 2) (1+ \log (n^2)) \log 5 \cdot \log \alpha \cdot 2kT_k \cdot 2S.
\end{equation*}
We can estimate $1+ \log n^2 = 1 + 2\log n \leq 3 \log n$ (this estimate holds for $n \geq 3$; if $n<3$, we are immediately done). Then we simplify the above lower bound to 
\begin{equation}\label{eq:estimatelowerbound-Lambdak+kstar}
	\log |\Lambda_{Bk}^*|
		\geq - \Ck \cdot \log n \cdot \log \alpha \cdot k \cdot T_k \cdot S,
\end{equation}
with
\begin{align*}
	\Ck 
	&:= 2.1 \cdot 10^{15}\\
	&\geq 1.4 \cdot 30^{4+3} \cdot 4^{4.5} \cdot 2^2 (1+\log 2) \cdot 3 \cdot \log 5 \cdot 2 \cdot 2.
\end{align*}

Together with \eqref{eq:Lambdabkstar} this yields
\begin{equation*}\label{eq:comparebounds_k+k}
	- \Ck \cdot \log \alpha \cdot S \cdot T_k \cdot \log n
	\leq \log |\Lambda_{Bk}^*|
	\leq \log 18 + \log a - n_1 \log \alpha,
\end{equation*}
which implies 
\begin{equation}\label{eq:bound_stepBk}
	n_1 \leq \Ck k S T_k \log n
	=: T_{k+1}.
\end{equation}
Note that we are allowed to omit the expressions $\log 18$ and $\log a$ because $\log a \leq \log n$ and the constant $\Ck$ is extremely large and was estimated very roughly.
Moreover, note that the upper bound coming from \eqref{eq:Lambdabkstar} only holds if $n\geq 6$. However, if $n<6$, then \eqref{eq:bound_stepBk} is trivially fulfilled. An analogous argument will implicitly be used in all other steps as well.

\subsection{Steps B1 to Bk--1}

First, assume that $\ell\in \{2\bb k-1\}$ and assume that we have already found bounds $T_\ell \geq n_1-n_\ell$ and $S \geq n-m$. Then in Step B$\ell$ we consider the linear form $\Lambda_{B\ell}^*$ and, completely analogously to Step B$k$, we obtain a lower bound $\log |\Lambda_{B\ell}^*| \geq 
-\Ck \cdot \log n \cdot \log \alpha \cdot \ell \cdot T_\ell \cdot S$. By comparing the upper and lower bound for $|\Lambda_{B\ell}^*|$ we then, analogously to Step B$k$, obtain
\begin{equation}\label{eq:bound_stepBl}
	n_1-n_{\ell +1}
	\leq \Ck \ell S T_\ell \log n
	=:T_{\ell +1}.
\end{equation} 

Now let us have a look at Step B$1$. The linear form $\Lambda_{B1}^*$ has only three logarithms (instead of four), so it is possible to get a better lower bound. However, for simplicity, we can set $T_1 = 1$  and see that a bound analogous to \eqref{eq:estimatelowerbound-Lambdak+kstar} still holds: $\log |\Lambda_{B1}^*| \geq - \Ck \cdot \log n \cdot \log \alpha \cdot 1 \cdot 1 \cdot S$. Thus, in Step B1 we can also obtain the bound \eqref{eq:bound_stepBl} for $\ell=1$ and $T_1=1$.

\subsection{Steps A1 to Ak}
Let $\ell \in \{1\bb k\}$. In Step A$\ell$ we assume that we already have a bound $R_\ell \geq n_1-n_\ell$ (if $\ell = 1$, we have no bound yet). We consider the linear form $\Lambda_{A\ell}^*$, which is almost exactly equal to the corresponding linear form $\Lambda_{B\ell}^*$, except that the logarithm $\log(\alpha^{n-m} + 1)$ is missing. Thus, we only have three logarithms (or even two logarithms if $\ell = 1$) and we can obtain even better lower bounds than in \eqref{eq:estimatelowerbound-Lambdak+kstar}. 
However, for simplicity we set $S=1$ and see that the lower bound
\[
	\log |\Lambda_{A\ell}^*|
	\geq - \Ck \cdot \log n \cdot \log \alpha \cdot \ell \cdot R_\ell
\]
holds (where $R_1=1$ if $\ell =1$).
Thus, by comparing upper and lower bounds, we obtain
\begin{equation}\label{eq:boundStepAl}
	\min \{n_1 - n_{\ell + 1}, n- m \} 
	\leq \Ck \ell R_\ell \log n
\end{equation}
for $l<k$.
If $\min \{n_1 - n_{\ell + 1}, n- m \} = n_1 - n_{\ell + 1}$, 
we set $R_{\ell + 1} := \Ck \ell R_\ell \log n$, otherwise we set $S_{k}:=\Ck \ell R_\ell \log n$. 
For $\ell = k$ we obtain 
\[
	\min \{n_1, n- m \} 
	\leq \Ck k R_k \log n,
\]
and if $\min \{n_1, n- m \} = n_1$, we set $T_{k+1} = \Ck k T_k \log n$; otherwise we set $S_{k} := \Ck k T_k \log n$.

\section{Walking the steps}\label{sec:walking}

Depending on how large $n-m$ is compared to $n_1 - n_2 \bb n_1 - n_k, n_1$, we do a specific series of steps, starting with Step A1. 

Say we are in Step A$\ell$. If $\min\{n_1-n_{\ell+1}, n- m \} = n_1-n_{\ell+1}$, then we get a bound $R_{\ell+1}$ for $n_1-n_{\ell+1}$ and we continue with Step A$\ell+1$. In the other case that $n-m$ is the minimum, we get a bound $n-m \leq S_{\ell}$ and we continue with Step B$\ell$, and
after that, we continue with Steps B$\ell+1$ all the way until Step B$k$. This is illustrated in Figure~\ref{fig:steps2}. Note that the question marks stand for numbers and letters that depend on the case, i.e.\ which path we are coming from. For example, if $n-m \leq n_1 - n_2$, then we do  Step A1 -- Step B1 -- Step B2 -- $\cdots$. In this case, in Step B2 we get $n_1 - n_3 \leq T_3 = \Ck \cdot 2 \cdot S_1 \cdot T_2 \cdot \log n$. On the other hand, if $n_1 - n_2 \leq n-m \leq n_1 - n_3$, then we do Step A1 -- Step A2 -- Step B2 -- $\cdots$ and we get the bound $n_1 - n_3 \leq T_3 = \Ck \cdot 2 \cdot S_2 \cdot R_2 \cdot \log n$.

If $n_1 < n-m$, we finish with Step A$k$, obtaining the bound $n_1\leq R_{k+1}$. In all other cases, we finish with Step B$k$, obtaining a bound $n_1 \leq T_{k+1}$.

\newcommand{\myboxt}[3]{
\node[entity] (step#1)[#2] {
 \begin{tabular}[t]{p{3cm}}
   \textbf{Step #1:} \\
   #3
\end{tabular}
 };
}

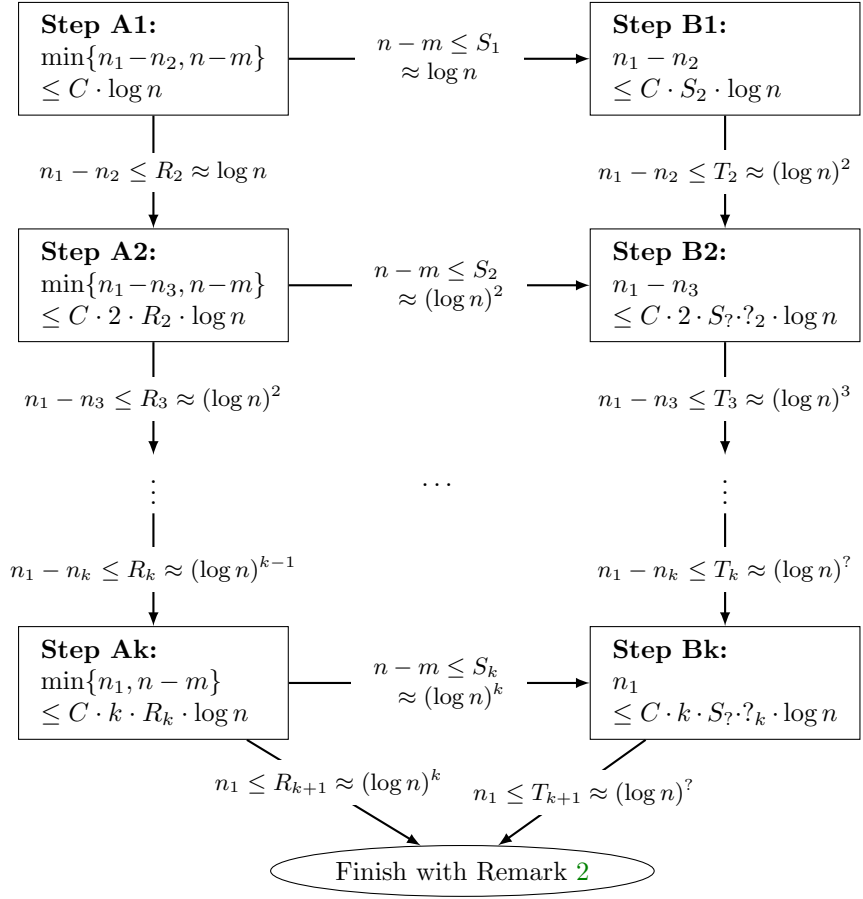
\begin{figure}[h]
\begin{tikzpicture}[auto,node distance=1.5cm]

\myboxt{A1}{}
{$\min \{ n_1 - n_2, n-m \}$ \\ $\leq \Ck \cdot \log n$}

\myboxt{A2}{below =of stepA1}
{$\min \{ n_1 - n_3, n-m \}$ \\ $\leq \Ck \cdot 2 \cdot R_2 \cdot \log n$}

\node[](dotsleft)[below = of stepA2] {\vdots};

\myboxt{Ak}{below =of dotsleft}
{$\min \{n_1, n-m \}$ \\ $\leq \Ck \cdot k \cdot R_k \cdot \log n$}

\myboxt{B1}{right = 4cm of stepA1}
{$n_1 - n_2$ \\ $\leq \Ck \cdot S_2 \cdot \log n$}

\myboxt{B2}{right = 4cm of stepA2}
{$n_1 - n_3$ \\ $\leq \Ck \cdot 2 \cdot S_? \cdot ?_2 \cdot \log n$}

\node[](dotsright)[below = of stepB2] {\vdots};

\myboxt{Bk}{right = 4cm of stepAk}
{$n_1$ \\ $\leq \Ck \cdot k \cdot S_? \cdot ?_k \cdot \log n$}

\node (dotsmiddle) at ($(dotsleft)!0.5!(dotsright)$) {\dots};

\node[ellipse, draw] (doneEnd) [below right = 1.5cm and 0.5cm of stepAk] 
{Finish with Remark \ref{rem:strategy}};


\pfeilrueber{stepA1}{stepB1}{$n-m \leq S_1$  \\ $\quad \approx \log n$}
\pfeilrueber{stepA2}{stepB2}{$n-m \leq S_2$  \\ $\quad \approx (\log n)^2$}
\pfeilrueber{stepAk}{stepBk}{$n-m \leq S_{k}$  \\ $\quad \approx (\log n)^{k}$}

\pfeilrunter{stepA1}{stepA2}{$n_1-n_2$ $\leq R_2 \approx \log n$}
\pfeilrunter{stepA2}{dotsleft}{$n_1-n_3$ $\leq R_3 \approx (\log n)^2$}
\pfeilrunter{dotsleft}{stepAk}{$n_1-n_{k}$ $\leq R_{k} \approx (\log n)^{k-1}$}

\pfeilrunter{stepB1}{stepB2}{$n_1-n_2$ $\leq T_2 \approx (\log n)^2 $}
\pfeilrunter{stepB2}{dotsright}{$n_1-n_3$ $\leq T_3 \approx (\log n)^3$}
\pfeilrunter{dotsright}{stepBk}{$n_1-n_k$ $\leq T_k \approx (\log n)^{?}$}

\draw [-latex,thick] (stepAk) --  node [fill=white, anchor=center, pos=0.4] {\small{\quad $n_1 \leq R_{k+1} \approx (\log n)^k$}} (doneEnd) ;
\pfeilrunter{stepBk}{doneEnd}{\quad $n_1 \leq T_{k+1} \approx (\log n)^?$}

\end{tikzpicture}
\caption{Overview of steps}
\label{fig:steps2}
\end{figure}

In order to compute the maximal such bound $n_1 \leq R_{k+1}$ or $n_1 \leq T_{k+1}$, we need to see what exactly happens when we ``walk the steps''.

\noindent\textit{Case 1:} $n_1 < n-m$ (``Walking down the left side'').

\noindent In this case we start with $R_1 = 1$ and at each Step A$\ell$ we compute the next bound as described in \eqref{eq:boundStepAl}, namely by
\[
	R_{\ell + 1} = R_{\ell} \cdot \Ck \cdot \ell \cdot \log n.
\] 
From this recursion, we immediately see that the last bound is 
\begin{equation}\label{eq:boundRk+1}
	R_{k+1} = \Ck^k \cdot k! \cdot (\log n)^k.
\end{equation}

\medskip\noindent\textit{Case 2:} $n_1 \geq n-m$ (``Crossing over'').

\noindent In this case, we cross from left to right at some point in Figure~\ref{fig:steps2}, i.e.\ we go from Step A$\ell_0$ to Step B$\ell_0$ for some $\ell_0 \in \{1\bb k\}$. This means that we get bounds in the following order: 
$R_1 = 1, R_2 \bb R_{\ell_0}, S_{\ell_0}, T_{\ell_0 + 1} \bb T_{k+1}$. 
By the same reasoning as in Case 1, the last bound on the left will be 
\[
	R_{\ell_0} = \Ck^{\ell_0-1} \cdot (\ell_0 -1)! \cdot (\log n)^{\ell_0 -1}.
\]
Then, in Step A$\ell_0$, the bound $S_{\ell_0}$ is computed in the same way, and we obtain
\[
	S_{\ell_0}
	= \Ck^{\ell_0} \cdot \ell_0! \cdot (\log n)^{\ell_0}.
\]
After that, in Step B$\ell_0$, we obtain by \eqref{eq:bound_stepBl} (or in the case $\ell_0=k$ by \eqref{eq:bound_stepBk}) the bound
\begin{align*}
	T_{\ell_0+1}
	&= \Ck \cdot \ell_0 \cdot S_{\ell_0} \cdot R_{\ell_0} \cdot \log n\\
	&= \Ck \cdot \ell_0 
		\cdot (\Ck^{\ell_0} \cdot \ell_0! \cdot (\log n)^{\ell_0}) 
		\cdot ( \Ck^{\ell_0-1} \cdot (\ell_0 -1)! \cdot (\log n)^{\ell_0 -1}) 
		\cdot \log n\\
	&= \Ck^{2\ell_0} \cdot (\ell_0 !)^2 \cdot (\log n)^{2\ell_0}.
\end{align*}
After that, for $1\leq i \leq k-\ell_0$, we have the recursion
\begin{align*}
	T_{\ell_0 + i + 1}
	&= \Ck \cdot (\ell_0 + i) \cdot S_{\ell_0} \cdot T_{\ell_0 + i} \cdot \log n\\
	&= \Ck \cdot (\ell_0 + i) \cdot (\Ck^{\ell_0} \cdot \ell_0! \cdot (\log n)^{\ell_0}) \cdot T_{\ell_0 + i} \cdot \log n \\
	&= T_{\ell_0 + i} \cdot C^{\ell_0 + 1} \cdot \ell_0! \cdot (\log n)^{\ell_0 + 1} \cdot  (\ell_0 + i).
\end{align*}
From this recursion, we obtain the last bound
\begin{align}\label{eq:Tkplus1}
	T_{k+1}
	&= T_{\ell_0+1 + (k-\ell_0)} \\
	&= T_{\ell_0+1} \cdot (C^{\ell_0 + 1} \cdot \ell_0! \cdot (\log n)^{\ell_0 + 1})^{k-\ell_0} \cdot (\ell_0+1)(\ell_0 + 2) \cdots k \nonumber\\
	&= \left(\Ck^{2\ell_0} \cdot (\ell_0 !)^2 \cdot (\log n)^{2\ell_0} \right)
		\cdot \Ck^{(\ell_0+1)(k-\ell_0)} \cdot (\ell_0!)^{k-\ell_0} \cdot (\log n)^{(\ell_0+1)(k-\ell_0)} \cdot k!/\ell_0!\nonumber\\
	&= k! 
		\cdot \Ck ^{(\ell_0+1)(k-\ell_0) + 2 \ell_0} 
		\cdot 	(\ell_0!)^{k-\ell_0 + 1}
		\cdot  (\log n)^{(\ell_0+1)(k-\ell_0) + 2 \ell_0}.\nonumber
\end{align}
This bound depends on $\ell_0$. In order to obtain an overall upper bound for all $1\leq \ell_0 \leq k$, we first compute the maximum of the exponent $(\ell_0+1)(k-\ell_0) + 2 \ell_0 = k + \ell_0(k  + 1- \ell_0)$. For fixed $k$, the quadratic function $f(\ell) = k + \ell(k  + 1 - \ell)$ has a maximum in $\ell = (k+1)/2$, and the maximum is $(k^2 +6k + 1)/4$. Thus we have 
\[
	(\ell_0+1)(k-\ell_0) + 2 \ell_0 
	\leq (k^2 +6k + 1)/4.
\]
Finding the maximum of the expression $(\ell_0!)^{k-\ell_0 + 1}$ is much harder. We bound the expression in a rough way. Note that for $1\leq \ell \leq k$, we have
\[
	(\ell!)^{k-\ell + 1}
	\leq \left( \ell ^\ell \right)^{k-\ell + 1}
	\leq \left( k ^\ell \right)^{k-\ell + 1}
	\leq k^{\ell(k-\ell + 1)}.
\]
Again, the exponent $\ell(k-\ell + 1) = \ell(k + 1 -\ell)$ is maximal in $\ell = (k+1)/2$, and the maximum is $(k^2 +2k + 1)/4$. Thus we have
\[
	(\ell_0!)^{k-\ell_0 + 1}
	\leq k^{(k^2 +2k + 1)/4}.
\]
Finally, for simplicity, we estimate $k! \leq k^k$. Then we obtain from \eqref{eq:Tkplus1} that 
\begin{align*}
	T_{k+1}
	&\leq k^k \cdot \Ck^{(k^2 +6k + 1)/4} 
		\cdot 	k^{(k^2 +2k + 1)/4}
		\cdot  (\log n)^{(k^2 +6k + 1)/4}\\
	&= \Ck ^{(k^2 +6k + 1)/4} 
		\cdot 	k^{(k^2 +2k + 5)/4}
		\cdot  (\log n)^{(k^2 +6k + 1)/4}.
\end{align*}
Thus, we have proven that no matter at which  point we cross from the left to the right (Step A${\ell_0}$ -- Step B$\ell_0$), we always end up with the bound above.
Since this bound is of course larger than the bound \eqref{eq:boundRk+1} from Case~1, we overall obtain 
\begin{equation}\label{eq:bound_after_steps}
	n_1
	\leq \Ck ^{(k^2 +6k + 1)/4} 
		\cdot 	k^{(k^2 +2k + 5)/4}
		\cdot  (\log n)^{(k^2 +6k + 1)/4}.
\end{equation}

\section{Finishing the proof}\label{sec:finish}

Now we finish the proof as announced in Remark~\ref{rem:strategy}.
Inequilality~\eqref{eq:bound_after_steps} combined with Theorem~\ref{thm:Kebli} and Lemma~\ref{lem:logyn1} yields
\begin{align}
	n \nonumber
	< 6 \cdot 10^{29} \cdot n_1^4
	&\leq 6 \cdot 10^{29} \cdot (\Ck ^{(k^2 +6k + 1)/4} 
		\cdot 	k^{(k^2 +2k + 5)/4}
		\cdot  (\log n)^{(k^2 +6k + 1)/4})^4 \\
	&\leq \Ck ^{k^2 +6k + 3} 
		\cdot 	k^{k^2 +2k + 5}
		\cdot  (\log n)^{k^2 +6k + 1}, \label{eq:n-inequ-finish}
\end{align}
where we used $6 \cdot 10^{29} \leq C^2 = (2.1 \cdot 10^{15})^2$.

Let $\eps >0$ be given. 

We want to apply Lemma~\ref{lem:ungl} to inequality~\eqref{eq:n-inequ-finish}, 
 setting $c = \Ck ^{k^2 +6k + 3} \cdot k^{k^2 +2k + 5}$ and $x=k^2 +6k + 1$. Moreover, we fix a $0 < \delta < 1$, which we will specify in a moment.

First, we compute the last bound from Lemma~\ref{lem:ungl}:
 \begin{align*}
	(2x)^{(1+\delta)x} \cdot c
	&= (2 (k^2 +6k + 1))^{(1+\delta) (k^2 +6k + 1)}
		\cdot (\Ck ^{k^2 +6k + 3} \cdot k^{k^2 +2k + 5})\\
	&\leq (16k^2)^{(1+\delta) (k^2 +6k + 1)} \cdot \Ck ^{10k^2} \cdot k^{k^2 +2k + 5} \\	
	&\leq C_1^{k^2} \cdot k^{2(1+\delta) (k^2 +6k + 1) + (k^2 + 2k + 5)},
\end{align*}
where we may have set $C_1 = 16 ^{2 \cdot 8} \cdot \Ck^{10}$. 
Now if we fix a $0 < \delta < \min\{\eps/2,1\}$, then the expression 
$k^{(3+\eps)k^2}$ grows faster than the bound $ C_1^{k^2} \cdot k^{2(1+\delta) (k^2 +6k + 1) + (k^2 + 2k + 5)}$. Therefore, there exists an effectively computable constant $C_2(\delta, \eps)$, such that
\begin{equation}\label{eq:finalbound3}
	(2x)^{(1+\delta)x} \cdot c
	\leq C_2(\delta, \eps)  k^{(3+\eps)k^2}.
\end{equation}

Next, we compute the second bound from Lemma~\ref{lem:ungl}:
\begin{align*}
	2^x \cdot c \cdot (\log c)^x
	&= 2 ^ {k^2 +6k + 1} 
		\cdot ( \Ck ^{k^2 +6k + 3} \cdot k^{k^2 +6k + 1} )
		\cdot (\log (\Ck ^{k^2 +6k + 3} \cdot k^{k^2 +2k + 5}))^{k^2 +6k + 1} \\
	&\leq C_3^{k^2} \cdot  k^{k^2 +6k + 1}
		\cdot ((k^2 +6k + 1) \log \Ck + (k^2 +2k + 5) \log k)^{k^2 +6k + 1}\\
	& \leq C_4^{k^2} \cdot  k^{k^2 +6k + 1}
		\cdot (k^2 \log k)^{k^2 +6k + 1}\\
	&= C_4^{k^2} \cdot  (k^3 \log k)^{k^2 +6k + 1},
\end{align*}
where $C_3, C_4$ are effectively computable constants (similarly to how we obtained $C_1$ in the previous computation).
Again, since for any fixed $\eps>0$, the expression $k^{(3+\eps)k^2}$ grows faster than the bound $ C_4^{k^2} \cdot  (k^3 \log k)^{k^2 +6k + 1}$, there exists an effectively computable constant $C_5(\eps)$, such that
\begin{equation}\label{eq:finalbound2}
	2^x \cdot c \cdot (\log c)^x
	\leq C_5(\eps)  k^{(3+\eps)k^2}.
\end{equation}

Finally, we consider the first bound from Lemma~\ref{lem:ungl}.
Since we have fixed $\delta$, it is clear that there exists an effectively computable constant $C_6(\delta,\eps)$, such that
\begin{equation}\label{eq:finalbound1}
	\exp (\exp ( ( 1+ \delta^{-1})^2) )
	\leq C_6(\delta,\eps) k^{(3+\eps)k^2}.
\end{equation}

We set
\[
	C(\eps) = \max\{
		C_2(\delta, \eps), C_5(\eps), C_6(\eps, \delta)
	\}.
\]
Now an application of Lemma~\ref{lem:ungl} to \eqref{eq:n-inequ-finish}, together with \eqref{eq:finalbound1}, \eqref{eq:finalbound2} and \eqref{eq:finalbound3}, yields
\begin{equation}\label{eq:nfinalbound}
	n
	\leq C(\eps) k^{(3+\eps)k^2}.
\end{equation}
Finally, we can bound $y^a$ by
\begin{align*}
	\log y^a
	&= \log (F_n + F_m)
	< \log (2 F_n)
	< \log (2 \alpha^n)
	= \log 2 + n \log \alpha \\
	&\leq \log 2 + C(\eps) k^{(3+\eps)k^2} \log \alpha.
\end{align*}
This implies
\[
	\log y^a
	\leq C(\eps) k^{(3+\eps)k^2}
\]
and we have proven Theorem~\ref{thm:main}. \qed
 
\bibliographystyle{habbrv}
\bibliography{Literatur_Dioph}

\begin{thebibliography}{10}
\expandafter\ifx\csname url\endcsname\relax
  \def\url#1{\texttt{#1}}\fi
\expandafter\ifx\csname doi\endcsname\relax
  \def\doi#1{\burlalt{doi:#1}{http://dx.doi.org/#1}}\fi
\expandafter\ifx\csname urlprefix\endcsname\relax\def\urlprefix{URL: }\fi
\expandafter\ifx\csname href\endcsname\relax
  \def\href#1#2{#2}\fi
\expandafter\ifx\csname burlalt\endcsname\relax
  \def\burlalt#1#2{\href{#2}{#1}}\fi

\bibitem{BravoLuca2016}
J.~J. Bravo and F.~Luca.
\newblock On the {D}iophantine equation {$F_n+F_m=2^a$}.
\newblock {\em Quaest. Math.}, 39(3):391--400, 2016.
\newblock \doi{10.2989/16073606.2015.1070377}.

\bibitem{BugeaudLucaMignotteSiksek2007}
Y.~Bugeaud, F.~Luca, M.~Mignotte, and S.~Siksek.
\newblock Perfect powers from products of terms in {L}ucas sequences.
\newblock {\em J. Reine Angew. Math.}, 611:109--129, 2007.
\newblock \doi{10.1515/CRELLE.2007.075}.

\bibitem{BugeaudMignotteSiksek2006}
Y.~Bugeaud, M.~Mignotte, and S.~Siksek.
\newblock Classical and modular approaches to exponential {D}iophantine
  equations. {I}. {F}ibonacci and {L}ucas perfect powers.
\newblock {\em Ann. of Math. (2)}, 163(3):969--1018, 2006.
\newblock \doi{10.4007/annals.2006.163.969}.

\bibitem{Cordwell:2018}
K.~Cordwell, M.~Hlavacek, C.~Huynh, S.~J. Miller, C.~Peterson, and Y.~N.~T. Vu.
\newblock Summand minimality and asymptotic convergence of generalized
  {Z}eckendorf decompositions.
\newblock {\em Res. Number Theory}, 4(4):Paper No. 43, 27, 2018.
\newblock \doi{10.1007/s40993-018-0137-7}.

\bibitem{KebliKihelLaroneLuca2021}
S.~Kebli, O.~Kihel, J.~Larone, and F.~Luca.
\newblock On the nonnegative integer solutions to the equation {$F_ n\pm F_m =
  y^a$}.
\newblock {\em J. Number Theory}, 220:107--127, 2021.
\newblock \doi{10.1016/j.jnt.2020.08.004}.

\bibitem{KihelLarone2021}
O.~Kihel and J.~Larone.
\newblock On the nonnegative integer solutions of the equation {$F_n\pm
  F_m=y^a$}.
\newblock {\em Quaest. Math.}, 44(8):1133--1139, 2021.
\newblock \doi{10.2989/16073606.2020.1775155}.

\bibitem{Luca:2000}
F.~Luca.
\newblock Distinct digits in base {$b$} expansions of linear recurrence
  sequences.
\newblock {\em Quaest. Math.}, 23(4):389--404, 2000.
\newblock \doi{10.2989/16073600009485986}.

\bibitem{Luca:2017}
F.~Luca, A.~Montejano, L.~Szalay, and A.~Togb\'{e}.
\newblock On the {$X$}-coordinates of {P}ell equations which are tribonacci
  numbers.
\newblock {\em Acta Arith.}, 179(1):25--35, 2017.
\newblock \doi{10.4064/aa8553-2-2017}.

\bibitem{LucaPatel2018}
F.~Luca and V.~Patel.
\newblock On perfect powers that are sums of two {F}ibonacci numbers.
\newblock {\em J. Number Theory}, 189:90--96, 2018.
\newblock \doi{10.1016/j.jnt.2018.02.003}.

\bibitem{Matveev2000}
E.~M. Matveev.
\newblock An explicit lower bound for a homogeneous rational linear form in the
  logarithms of algebraic numbers. {II}.
\newblock {\em Izv. Math.}, 64(6):1217--1269, 2000.
\newblock \doi{10.1070/im2000v064n06abeh000314}.

\bibitem{Ziegler2022}
V.~Ziegler.
\newblock Sums of fibonacci numbers that are perfect powers.
\newblock {\em Quaest. Math.}, 0(0):1--26, 2022,
  \burlalt{https://doi.org/10.2989/16073606.2022.2109220}{http://arxiv.org/abs/https://doi.org/10.2989/16073606.2022.2109220}.
\newblock \doi{10.2989/16073606.2022.2109220}.

\end{thebibliography}

\end{document}